\documentclass[11pt]{article}
\usepackage[margin=1in]{geometry}
\usepackage{microtype}
\usepackage{amsmath,amssymb,amsfonts,amsthm,mathtools,mathrsfs}
\usepackage{bm}
\usepackage{graphicx}
\usepackage{subcaption}   
\usepackage{float}        
\usepackage{placeins}     
\usepackage{longtable}
\usepackage{blkarray}
\usepackage{changepage}
\usepackage[shortlabels]{enumitem}
\usepackage[linesnumbered,lined,ruled]{algorithm2e}
\usepackage{tikz}
\usetikzlibrary{positioning,fit}
\usetikzlibrary{calc,positioning}
\usepackage{xcolor}
\usepackage{color}
\definecolor{blue}{rgb}{0,0,0.9} 
\definecolor{red}{rgb}{0.9,0,0} 
\definecolor{green}{rgb}{0,0.9,0}

\usepackage{comment}
\usepackage[
  colorlinks=true,
  linkcolor=blue,
  citecolor=blue,
  urlcolor=blue
]{hyperref}
\usepackage[nameinlink,noabbrev]{cleveref}
\usepackage[style=numeric, backend=biber]{biblatex}
\numberwithin{equation}{section}

\theoremstyle{plain}
\newtheorem{thm}{Theorem}[section]

\newtheorem{lem}[thm]{Lemma}

\newtheorem{prop}[thm]{Proposition}

\theoremstyle{definition}

\theoremstyle{remark}

\makeatletter
\newcommand{\myitem}[2][]{%
  \item[#1]%
  \protected@edef\@currentlabel{#1}%
  \label{#2}%
}
\makeatother

\newcommand{\dd}{\mathop{}\!\mathrm{d}}
\newcommand{\R}{\mathbb{R}}

\newcommand{\W}{\mathcal{W}}
\newcommand{\A}{\mathcal{A}}

\newcommand{\mF}{\mathrm{F}}
\newcommand{\C}{\mathcal{C}}
\newcommand{\B}{\mathcal{B}}
\newcommand{\N}{\mathcal{N}}
\newcommand{\E}{\mathcal{E}}
\newcommand{\G}{\mathcal{G}}

\newcommand{\cD}{\mathcal{D}}

\newcommand{\mom}{\mathrm{Mom}}
\newcommand{\ising}{\mathrm{Ising}}
\newcommand{\fit}{\mathrm{fit}}
\newcommand{\glb}{\mathrm{glb}}
\newcommand{\mpath}{\mathrm{path}}
\newcommand{\st}{\text{s.t.}\quad}
\newcommand{\plh}{\, \cdot \,}
\newcommand{\PL}{\paren{{\rm P}_{\rm L}}_\sharp}
\newcommand{\PR}{\paren{{\rm P}_{\rm R}}_\sharp}
\newcommand{\paren}[1]{\left( #1 \right)}
\newcommand{\bracket}[1]{\left[ #1 \right]}
\newcommand{\set}[1]{\left\{ #1 \right\}}

\newcommand{\angles}[1]{\left\langle #1 \right\rangle}

\let\svthefootnote\thefootnote
\newcommand{\blankfootnote}[1]{%
  \let\thefootnote\relax\footnotetext{#1}%
  \let\thefootnote\svthefootnote%
}

\title{Convex Relaxations for the Optimization of Markov Processes}

\author{Hongyi Zhang\thanks{Department of Statistics, University of Chicago, ({\tt hongyi518@uchicago.edu}).
         }, \quad
Yuehaw Khoo
	\thanks{Department of Statistics, University of Chicago, 
	({\tt ykhoo@uchicago.edu}). The research of this author is partially funded by NSF DMS-2339439, DOE DE-SC0022232, DARPA The Right Space HR0011-25-9-0031, and a Sloan research fellowship.}, \quad
	Tianyun Tang
\thanks{Department of Statistics, University of Chicago, ({\tt ttang@u.nus.edu}).
         }
	 }
	\date{\today}

\date{}

\begin{document}
\maketitle

\begin{abstract}
In this paper, we study the problem of optimizing Markov processes that interpolate between two prescribed probability distributions while minimizing a given cost. The main computational challenge is the curse of dimensionality: in high-dimensional state spaces, representing the full distribution is intractable. To address this, we reformulate the problem in terms of sequential couplings and develop convex relaxations based on local marginals and cluster moments. These relaxations exploit locality and sparse interaction structure, provide computable lower bounds, and recover low-order statistics of the intermediate laws. We identify dynamic optimal transport as a special case of our Markov process optimization problem and develop a procedure for recovering the underlying Benamou--Brenier dynamics from the relaxed solution. We also show that the procedure extends to more general Markov processes and illustrate it with a constrained process between Ising models.

\end{abstract}

\section{Introduction}

\subsection{Optimizing over Markov processes}\label{subsec1.1:problem}
In this paper, we study the problem of finding a Markov process that connects two prescribed distributions while minimizing a given cost. Let $(\Omega,\B(\Omega))$ be a Borel state space. We denote by $\mathcal{M}(\Omega)$ the set of finite signed Borel measures on $\Omega$, and by
\begin{align*}
    \mathcal{P}(\Omega):=\{\rho\in\mathcal{M}(\Omega):\rho\ge 0,\ \rho(\Omega)=1\}
\end{align*}
the set of probability measures on $\Omega$. Given initial and terminal distributions $\mu,\nu\in\mathcal{P}(\Omega)$, and one-step cost functions $c^s:\Omega\times\Omega\to\R$ for $s=0,\ldots,T-1$, we seek intermediate laws $\rho^s\in\mathcal{P}(\Omega)$ and Markov transition kernels $K^s(x,dy)$ solving
\begin{align}
    \inf_{\set{K^s}_{s=0}^{T-1}, \set{\rho^s}_{s=0}^T } &    \sum_{s=0}^{T-1} \int_\Omega\int_\Omega c^s(x,y)\,K^s(x,\dd y)\dd \rho^s(x) \label{problem:1.1}  \\
    \st & \rho^0 = \mu,\quad \rho^T = \nu, \label{constraint-endpoint 1.1} \tag{1.1-a}\\
    & \rho^{s+1} = \paren{K^s}^* \rho^s, \quad s=0,\ldots,T-1, \label{constraint-Markov 1.1} \tag{1.1-b}\\
    & K^s \in \mathcal K^s, 
    % \quad K^s(x,\cdot)\in\mathcal{P}(\Omega), 
    \quad s=0,\ldots,T-1, \label{constraint-control 1.1} \tag{1.1-c} \\
    & \rho^s \ge 0, \quad \rho^s(\Omega) = 1,\quad s=0,\ldots,T. \label{constraint-positivity 1.1} \tag{1.1-d}
\end{align}
Here $\mathcal{K}^s$ denotes the admissible class of Markov kernels at step $s$. The kernels are allowed to depend on $s$, so the process is generally time-inhomogeneous. In the Markov evolution constraint \eqref{constraint-Markov 1.1}, $(K^s)^*$ denotes the adjoint action of the kernel on probability measures: if $K^s$ acts on test functions by
\begin{align*}
    K^s f(x)=\int_{\Omega}f(y)\,K^s(x,dy),
\end{align*}
then, for every Borel set $B\in\mathcal{B}(\Omega)$ and probability measure $\rho\in\mathcal{P}(\Omega)$,
\begin{align*}
    (K^s)^*\rho(B)=\int_{\Omega}K^s(x,B)\dd \rho(x).
\end{align*}

A special case of problem \eqref{problem:1.1} is dynamic optimal transport. Classical optimal transport chooses a coupling between two endpoint distributions and minimizes a transportation cost \cite{kantorovich1942translocation,villani2009optimal}. Dynamic optimal transport instead seeks a curve of probability measures $\set{q(\plh, t)\in \mathcal{P}(\Omega): t\in[0,1]}$ with prescribed initial and terminal laws $q(\plh,0)$ and $q(\plh,1)$. When $\Omega \subset \R^d$, the Benamou--Brenier formula realizes this path through a velocity field \cite{benamou2000}:
\begin{align}\label{problem:BB}
    \W_p^p(\mu,\nu)=\inf_{q, v}\left\{\int_0^1\int_{\Omega}\|v(x,t)\|^p\dd q(x,t)\dd t:\partial_t q +\nabla \cdot(v\,q)=0, \; q(\plh, 0)=\mu, \;q(\plh, 1)=\nu\right\}. \tag{BB}
\end{align}
Here $\W_p$ denotes the $p$-Wasserstein distance and $\set{v(\plh, t): \Omega \rightarrow \R^d \mid t \in [0,1]}$ is the velocity field transporting the mass. In \Cref{sec:3}, we show that, after time discretization, the unconstrained kinetic-cost case of problem \eqref{problem:1.1} exactly recovers the optimal value and the grid-time measures of the Benamou--Brenier problem~\eqref{problem:BB}.

Beyond the unconstrained kinetic-cost case, Problem~\eqref{problem:1.1} also describes more general time-discrete distributional dynamics through the admissible kernel classes $\mathcal K^s$. Depending on the application, these classes may encode support, transition-probability, or local-update constraints on the one-step evolution. The single-spin process studied in \Cref{sec:Ising} is one concrete example. We focus on settings in which the corresponding coupling constraints admit tractable convex local outer approximations.

\subsection{Sequential-coupling formulation}
The Markov kernel formulation \eqref{problem:1.1} can equivalently be written as a sequential-coupling formulation. Given $\rho^s$ and $K^s$, define
\begin{align}\label{eq:pi & K}
    \pi^s(\dd x,\dd y) := \rho^s(\dd x)K^s(x,\dd y). 
\end{align}
Then $\pi^s\in\mathcal{P}(\Omega\times\Omega)$ is the joint law of two consecutive states $(X^s,X^{s+1})$. Let ${\rm P}_{\rm L},{\rm P}_{\rm R}:\Omega\times\Omega\to\Omega$ be the canonical projections onto the first and second components. The marginals of $\pi^s$ satisfy
\begin{align}\label{eq:rho & pi}
    \PL \pi^s = \rho^s, \quad \PR \pi^s = \rho^{s+1}, \quad s = 0, \ldots, T-1
\end{align}
where $\paren{\plh}_\sharp $ denotes the push-forward operator. Conversely, assuming $\Omega$ is a standard Borel space, any joint law $\pi^s$ admits a disintegration with respect to its first marginal, yielding a Markov transition kernel $K^s$ that satisfies \eqref{eq:pi & K} and is unique $\rho^s$-almost everywhere. In the finite-state case, this is simply $K^s(y|x)=\pi^s(x,y)/\rho^s(x)$ whenever $\rho^s(x)>0$.
% while $K^s(\cdot|x)$ may be chosen arbitrarily on states with $\rho^s(x)=0$. 
Therefore, optimizing over the transition kernels $K^s$ is equivalent to optimizing over the joint couplings $\pi^s$. Let $\cD^s$ be the sequential-coupling form of the Markov kernel constraints \eqref{constraint-control 1.1}: 
\begin{equation}\label{defiD}
    \cD^s:=\left\{ \pi^s\in \mathcal{P}(\Omega\times \Omega):\ \exists\ \rho^s\in \mathcal{P}(\Omega),\ K^s\in \mathcal{K}^s\ {\rm s.t.}\ \pi^s(\dd x,\dd y)=\rho^s (\dd x)K^s(x,\dd y) \right\},
\end{equation}
which means a nonnegative joint law $\pi^s$ belongs to $\cD^s$ if and only if its conditional transition kernel belongs to $\mathcal{K}^s$. Then \eqref{problem:1.1} can equivalently be written as
\begin{align}
    \inf_{\{\pi^s\}_{s=0}^{T-1}} \; &  \sum_{s=0}^{T-1} \int_{\Omega\times\Omega} c^s(x,y)\dd \pi^s(x,y) \label{problem:coupling}\\
    \st & \PL \pi^0 = \mu, \quad \PR \pi^{T-1} = \nu, \label{constraint-endpoint coupling} \tag{1.5-a}\\
    & \PR \pi^s = \PL \pi^{s+1}, \quad s=0,\ldots,T-2, \label{constraint-time consis coupling} \tag{1.5-b}\\
    & \pi^s\in \mathcal D^s, \quad s=0,\ldots,T-1, \label{constraint-control coupling} \tag{1.5-c}\\
    & \pi^s \ge 0, \quad \pi^s(\Omega \times \Omega)=1,\quad s=0,\ldots,T-1.\label{constraint-positivity coupling} \tag{1.5-d}
\end{align}
The intermediate distributions $\set{\rho^s}_{s=0}^{T}$ are absorbed into the couplings: once $\{\pi^s\}_{s=0}^{T-1}$ is known, the intermediate marginals at the prescribed grid times are recovered as
\begin{align}
    \rho^0=\PL \pi^0, \quad \rho^{s+1}=\PR \pi^s, \quad s=0,\ldots,T-1.
\end{align}

Our central goal is to solve problem \eqref{problem:coupling} in high dimensions. This is challenging because representing the full couplings $\set{\pi^s}_{s=0}^{T-1}$ suffers from the curse of dimensionality \cite{fournier2015rate,weed2019sharp}. To alleviate this issue, we develop convex relaxation methods that retain only low-order local information of $\pi^s$, such as a sparse collection of marginals or cluster moments. These relaxations, developed in \Cref{sec:conv relaxation}, provide computable lower bounds for problem \eqref{problem:coupling} and recover low-order statistics of the intermediate distributions. 

It remains to recover Markov kernels in the original problem \eqref{problem:1.1}. If the full couplings were available, the kernels could be obtained by disintegration as in \eqref{eq:pi & K}. After relaxation, however, only partial local information is available. In the dynamic optimal transport case, the Benamou--Brenier dynamics is represented by a velocity field; \Cref{sec:3} shows how this velocity can be recovered from dual variables of \eqref{problem:coupling}, yielding an associated Markov kernel through the induced flow. These dual variables can, in turn, be approximated by solutions of the dual convex relaxation. For more general constrained Markov processes, we fit a parametrized family of kernels to the recovered local statistics. \Cref{sec:Ising} develops this fitting procedure and illustrates it with a Markov process between Ising distributions based on Glauber dynamics.

\subsection{Our contributions}
We summarize the main contributions.

\begin{itemize}
    \item We recast the Markov process optimization problem \eqref{problem:1.1} into a sequential-coupling formulation \eqref{problem:coupling}, and propose a convex relaxation framework for solving it in high dimensions without incurring exponential computational costs. The framework can handle both discrete and continuous state spaces, and the relaxed solution provides low-order statistics of the intermediate distributions.

    \item We demonstrate that problem \eqref{problem:1.1} encompasses dynamic optimal transport as a special case, and we develop a procedure to recover the underlying Benamou--Brenier dynamics from the relaxed solution of \eqref{problem:coupling}.
    
    \item We show that our procedure extends beyond dynamic optimal transport to handle other types of dynamics. As a concrete example, we illustrate our method with a Markov process between Ising models governed by Glauber dynamics.
\end{itemize}

\subsection{Related work}

Dynamic optimal transport is a central special case of our framework. The Kantorovich problem and the Benamou--Brenier formula give the classical static and dynamic viewpoints on Wasserstein transport \cite{kantorovich1942translocation, villani2009optimal, benamou2000}. After the momentum substitution, the Benamou--Brenier problem is convex. Eulerian augmented-Lagrangian and proximal-splitting methods exploit this structure effectively in low dimensions, but their space--time grids suffer from the curse of dimensionality \cite{benamou2000,Papadakis_2014proximalSplitting}. High-dimensional alternatives use Lagrangian or sample-based representations and neural parameterizations of velocity fields or flow maps, including neural discretizations of velocity fields and flow maps \cite{wan2022scalabledeeplearningapproach,finlay2020neuralOED,xu2025Qflow}, TrajectoryNet-type methods \cite{tong2020trajectoryNet}, and flow-matching variants based on minibatch optimal transport or convex parameterizations \cite{tong2024OT-CFM,kornilov2024OFM}. These methods avoid full spatial grids and scale well empirically, but their training problems are generally nonconvex. In finite-sample implementations, endpoint matching and kinetic costs are estimated from samples, minibatch couplings, or penalties; sampling, approximation, optimization, and ODE-discretization errors are therefore intertwined, intermediate laws are usually assessed empirically, and certified lower bounds for the original unregularized action are generally unavailable. Our approach instead optimizes local marginals or cluster moments through a convex program, providing a computable lower bound, low-order intermediate statistics, and approximate dual velocity information. Before spatial relaxation, the sequential-coupling formulation also has the exact Benamou--Brenier value on any prescribed time grid.

Beyond the unconstrained Benamou--Brenier setting, a related literature restricts the admissible distributional dynamics themselves. Constrained dynamic optimal transport on parameterized families and dynamical transport for nonlinear control-affine systems replace the free velocity field by model-specific evolution laws \cite{li2018constrainedDOT,Karthik2023nonlinearDOT}. After time discretization and, when possible, elimination of auxiliary controls, such models may induce one-step kernel or coupling constraints of the type considered here. Their numerical treatment, however, is generally tailored to the prescribed dynamics; in particular, the control-affine method in \cite{Karthik2023nonlinearDOT} still relies on spatial discretization.

Schr\"odinger bridge problems also connect prescribed endpoint distributions through Markovian dynamics, but minimize path-space relative entropy with respect to a prior process \cite{leonard2014survey,chen2016SBrelationOT}. Recent diffusion and discrete variants provide scalable stochastic interpolations \cite{DeBortoliThorntonHengDoucet2021, ShiDeBortoliCampbellDoucet2023,KimEtAl2025}. By contrast, our formulation allows general additive one-step costs and hard constraints on admissible transition kernels, without requiring a reference process or entropic regularization. 

At the relaxation level, moment--SOS methods provide convex hierarchies for generalized moment and polynomial optimization \cite{lasserre2009moments, parrilo2020sum}, and have been applied to static optimal transport \cite{mula2022momentsos}. The associated dense moment matrices, however, grow rapidly with dimension. Sparse SOS hierarchies address this issue by exploiting correlative sparsity \cite{WakiKimKojimaMuramatsu2006,Lasserre2006}; more directly, the cluster marginal and moment relaxations in \cite{khoo2025} provide the closest methodological antecedent to our approach for high-dimensional static transport. Convex moment relaxations have also been developed for Fokker--Planck dynamics \cite{covFokkerPlack}. Our framework combines these ideas by applying local marginal or moment representations to each adjacent coupling and linking them through multiple constraints. The resulting program is therefore a convex relaxation of the full multistage problem, rather than a collection of independent static relaxations.

\subsection{Organization}
The rest of the paper is organized as follows. \Cref{sec:conv relaxation} develops the marginal and moment relaxations for the sequential-coupling problem \eqref{problem:coupling}. \Cref{sec:3} identifies dynamic optimal transport as a special case of problem \eqref{problem:coupling} and shows how the underlying Benamou--Brenier dynamics can be recovered from the relaxed solution of \eqref{problem:coupling}. \Cref{sec:Ising} develops a kernel-fitting procedure for more general Markov processes and illustrates it with a Markov process between Ising models governed by Glauber dynamics. \Cref{sec:experiments} presents numerical experiments and \Cref{sec:conclusion} concludes the paper.

\section{Convex relaxation} \label{sec:conv relaxation}

In this section, we develop a convex relaxation framework for problem \eqref{problem:coupling}. The construction is motivated by the marginal and cluster moment relaxations for static optimal transport in~\cite{khoo2025}, whose basic idea is to exploit the locality and sparse interaction structure of the problem by representing a coupling $\pi$ only through its local information on selected coordinate clusters and cluster pairs. In the sequential-coupling formulation, this local representation is applied to every coupling $\pi^s$, and the constraints in \eqref{problem:coupling} are imposed through the retained local variables. In \Cref{subsec:2.1 notation}, we introduce the basic notation and cluster structure used for this representation. \Cref{subsec:marginal relaxation} presents the marginal relaxation, which is natural for finite or discretized state spaces. \Cref{subsec:moment relaxation} presents the moment relaxation, which is well suited to continuous state spaces, where direct marginal discretization is computationally costly.

\subsection{Notation and cluster graph}\label{subsec:2.1 notation}
Let $[d]=\{1,\ldots,d\}$ be the coordinate set, and assume that the state space has the product form $\Omega=\Omega_1\times\cdots\times\Omega_d$. For $x\in\Omega$, write $x=(x_1,\ldots,x_d)$ where $x_i\in\Omega_i$. A cluster is a subset of coordinates. We choose a collection of clusters
\begin{align*}
    \C = \set{A_1, \ldots, A_K}, \quad K = |\C|, \; A_i \subset \bracket{d}, \; 1 \le i \le K
\end{align*}
to specify which local groups of coordinates will be represented explicitly. The purpose of the clusters is to retain local marginals while avoiding a full representation on all $d$ coordinates. If each cluster is small, then distributions on one cluster or on a pair of clusters remain low-dimensional while still capturing local dependence structure. Unless otherwise stated, we assume that $\C$ is a partition of $\bracket{d}$:
\begin{align*}
    A_a\cap A_b=\varnothing \quad (a\ne b), \quad \bigcup_{a=1}^K A_a=[d].
\end{align*}
For any $A\subset[d]$, define
\begin{align}
    x_A=(x_i)_{i\in A}, \quad \Omega_A=\prod_{i\in A}\Omega_i.
\end{align}
For an adjacent pair $z = (x,y)\in\Omega\times\Omega$, define
\begin{align}
    z_A=(x_A,y_A), \quad z_A \in Z_A=\Omega_A\times\Omega_A.
\end{align}
Thus $z_A$ contains the coordinates in cluster $A$ at two consecutive time layers. Throughout the paper, lowercase subscripts such as $i,j$, together with numerical subscripts such as $1,2$, refer to individual coordinates, whereas capital subscripts such as $A,B$ refer to clusters.

The pairwise information between clusters retained by the relaxation is encoded by a cluster graph 
\begin{align}
    \G=(\C,\E),
\end{align}
where the vertex set $\C$ is the family of all clusters and $\E$ is the edge set:
\begin{align*}
    \E \subset \bracket{\C}_2, \quad  \bracket{\C}_2 = \set{AB:A,B\in\C, A \neq B}.
\end{align*}
For brevity, we write $AB$ for the unordered pair $\set{A,B}$. An edge $AB\in\E$ means that we retain the joint information between clusters $A$ and $B$. Sparse choices of $\G$ lead to smaller convex programs. 

For a probability measure $\eta$ on $\Omega$, its one-cluster and cluster-pair marginals are denoted by
\begin{align}
    \eta_A=({\rm P}_A)_{\sharp} \, \eta, \quad \eta_{AB}=({\rm P}_{AB})_{\sharp} \, \eta, \qquad A \in \C, \quad AB \in [\C]_2,
\end{align}
where ${\rm P}_A:\Omega\to\Omega_A$ and ${\rm P}_{AB}:\Omega\to\Omega_A\times\Omega_B$ are the canonical projections. Similarly, for a coupling $\pi^s$ on $\Omega\times\Omega$, we denote its one-cluster and cluster-pair marginals by
\begin{align}
    \pi^s_A = ({\rm P}_A )_\sharp \, \pi^s, \quad \pi^s_{AB} = ({\rm P}_{AB})_\sharp \, \pi^s, \qquad A \in \C, \quad AB \in [\C]_2.
\end{align}
Here, by a slight abuse of notation, the same symbols $\mathrm{P}_A$ and $\mathrm{P}_{AB}$ are used for the corresponding projections on $\Omega \times \Omega$.

For an array of functions $\set{f_a(z)}_{a \in \A}$ over the two-layer variable $z = (x,y) \in \Omega \times \Omega$, we introduce selection operators ${\rm R}_x$ and ${\rm R}_y$, which retain only the entries that depend on the left and right time layers, respectively.
\begin{align}\label{def:RxRy}
    \begin{split}
        {\rm R}_x(\set{f_a(z)}_{a \in \mathcal{A}}) \coloneqq \set{f_a \mid \exists \, \widetilde{f} \text{ such that } f_a(x,y) = \widetilde{f}(x), a \in \A }, \\
        {\rm R}_y(\set{f_a(z)}_{a \in \A}) \coloneqq \set{f_a \mid \exists\, \widetilde{f} \text{ such that } f_a(x,y) = \widetilde{f}(y), a \in \A}.
    \end{split}
\end{align}
For example, applying ${\rm R}_x$ and ${\rm R}_y$ to a matrix of basis functions yields
\begin{align*}
    {\rm R}_x\left(
    \begin{bmatrix}
        1 & y_2  \\ 
        x_1^2 & x_1y_2
    \end{bmatrix}\right) =
    \begin{bmatrix}
        1 \\ x_1^2
    \end{bmatrix}, \quad 
    {\rm R}_y\left(
    \begin{bmatrix}
        1 & y_2  \\ 
        x_1^2 & x_1y_2
    \end{bmatrix}\right) =
    \begin{bmatrix}
        1 \\ y_2
    \end{bmatrix}.
\end{align*}

Finally, given a finite signed measure $\eta$ on a measurable space $W$ and an integrable scalar-, vector-, or matrix-valued test function $\Xi:W\to\mathbb{C}^{m\times n}$, we write its moment under $\eta$ as
\begin{align}
    \eta(\Xi)=\int_W \Xi(w)\dd \eta(w), 
\end{align}
with the integral understood entrywise. For example, when $\Xi_{AB}$ is a function on the set $Z_A \times Z_B = \paren{\Omega_A \times \Omega_A} \times \paren{\Omega_B \times \Omega_B}$, its moment under the cluster-pair marginal $\pi_{AB}$ is 
\begin{align*}
    \pi_{AB}^s(\Xi_{AB})=\int_{Z_A\times Z_B}\Xi_{AB}(z_A,z_B)\dd \pi_{AB}^s(z_A,z_B).
\end{align*}

\subsection{Marginal relaxation}\label{subsec:marginal relaxation}
In this subsection, we present the marginal relaxation of problem \eqref{problem:coupling}. Given a specified cluster decomposition $\C$ of the coordinate set $[d]$, which is intended to group strongly correlated coordinates together, the full coupling $\pi^s(x,y)$ on each time interval $s = 0, \ldots, T-1$ is replaced by its local marginals on clusters and cluster pairs
\begin{align}\label{notation:reduced var Mar}
    \{\pi_A^s\}_{A\in\C}, \quad \{\pi_{AB}^s\}_{AB\in \bracket{\C}_2}.
\end{align}
Here $\pi_A^s$ is a probability measure on $Z_A$, and $\pi_{AB}^s$ is a probability measure on $Z_{A}\times Z_B$. In practice, the size of each cluster is usually much smaller than $d$, so these local objects can be represented directly on the chosen finite state space or spatial grid. We also use the cluster graph $\G = (\C, \E)$ introduced above. Its edge set specifies which cluster-pair marginals are retained in the objective and subsequent constraints.
 
Assume that the one-step cost either admits the local decomposition
\begin{align}\label{eq:relaxed cost Mar}
    c^s(z) = \sum_{A\in\C} c_{A}^s(z_A) + \sum_{AB\in\E} c_{AB}^s(z_A,z_B),
\end{align}
or is approximated by the right-hand side. If the decomposition is exact and the local marginals are induced by a full coupling $\pi^s$, then
\begin{align}
    \int_{\Omega\times\Omega}c^s(z)\dd \pi^s(z) = \sum_{A\in\C}\int_{Z_A}c_A^s(z_A)\dd \pi_A^s(z_A)+\sum_{AB\in\E}\int_{Z_A\times Z_B}c_{AB}^s(z_A,z_B)\dd \pi_{AB}^s(z_A,z_B). 
\end{align}
For example, the quadratic cost $c^s(x,y)= \|x-y\|^2$ decomposes over a partition $\C$ as $\|x-y\|^2 = \sum_{A\in\C} \|x_A-y_A\|^2$. Thus, in this case, only one-cluster cost terms are needed. More general local costs may include the cluster-pair terms $c_{AB}^s$.

We next impose constraints on the reduced variables.
\paragraph{1. Marginal consistency constraints.}
For every pair $AB \in [\C]_2$ and each $s = 0, \ldots, T-1$, the one-cluster and cluster-pair marginals must be compatible in the sense that
\begin{equation}\label{constraint-local consis Mar}
    ({\rm P}_{A} )_\sharp \, \pi^s_{AB} = \pi^s_A, \quad ({\rm P}_B)_\sharp  \, \pi^s_{AB} = \pi^s_B, \quad \forall AB \in [\C]_2.
\end{equation}
These constraints are necessary if the family $\set{\pi_A^s, \pi_{AB}^s}$ is to be interpreted as the collection of one-cluster and cluster-pair marginals of a common global coupling $\pi^s$.

\paragraph{2. Initial and terminal constraints.}
The initial and terminal constraints in the full sequential-coupling problem, namely \eqref{constraint-endpoint coupling}, require the left marginal of the first coupling to be $\mu$ and the right marginal of the last coupling to be $\nu$. In the marginal relaxation, these constraints are relaxed to hold only on the retained cluster-pair marginals:
\begin{align}\label{constraint-endpoint Mar}
    \PL \pi_{AB}^0=\mu_{AB}, \quad \PR \pi_{AB}^{T-1}=\nu_{AB}, \quad AB\in\E.
\end{align}
If every cluster is incident to at least one edge in $\E$, then the corresponding initial and terminal constraints for one-cluster marginals follow from \eqref{constraint-local consis Mar} and \eqref{constraint-endpoint Mar}. If isolated clusters are allowed, their initial and terminal constraints are added explicitly.

\paragraph{3. Mass conservation constraints.}
The right marginal of $\pi^s$ can be interpreted as the distribution of mass arriving at time $t_{s+1}$, whereas the left marginal of $\pi^{s+1}$ represents the distribution of mass leaving at time $t_{s+1}$. The mass conservation constraints \eqref{constraint-time consis coupling} ensure that these two marginals coincide. In the marginal relaxation, this condition is imposed after projection onto the retained cluster pairs:
\begin{align}\label{constraint-time consis Mar}
    \PR \pi_{AB}^s=\PL \pi_{AB}^{s+1}, \quad AB\in\E, \quad s = 0,\ldots T-2. 
\end{align}
Thus \eqref{constraint-time consis Mar} is the projected local form of mass conservation across adjacent time intervals.

\paragraph{4. Markov kernel constraints.}
The admissible-kernel constraint in the Markov formulation, $K^s \in \mathcal{K}^s$, is expressed in the sequential-coupling formulation as $\pi^s \in \cD^s$ where $\cD^s$ denotes the admissible class of couplings at step $s$. At the cluster level, we replace $\cD^s$ by a convex local admissible set $\cD_{\mathrm{Mar}}^s$, and we impose
\begin{align}\label{constraint-control Mar}
    \{\pi_A^s,\pi_{AB}^s\}_{A\in\C,\,AB\in[\C]_2}\in\cD_{\mathrm{Mar}}^s,\qquad s=0,\ldots,T-1. 
\end{align}

The set $\cD_{\mathrm{Mar}}^s$ is a convex outer approximation that contains the local marginals induced by any admissible coupling $\pi^s \in \mathcal{D}^s$. Thus, this constraint remains a relaxation of the original Markov kernel constraint.

\paragraph{5. Local positivity constraints.}
The nonnegativity constraint \eqref{constraint-positivity coupling} for the full coupling implies nonnegativity of all its local marginals. We impose this at the cluster-pair level:
\begin{align}\label{constraint-local positivity Mar}
    \pi_{AB}^s \ge 0, \quad AB \in [\C]_2,\quad s = 0,\cdots T-1.
\end{align}
The one-cluster nonnegativity $\pi_A^s \ge 0$ follows from \eqref{constraint-local consis Mar} and \eqref{constraint-local positivity Mar}, provided each cluster appears in at least one retained pair. If isolated one-cluster variables are allowed, their nonnegativity is imposed explicitly.

\paragraph{6. Global positivity constraints.}
The preceding local constraints do not guarantee that the family $\set{\pi_A^s, \pi_{AB}^s}_{A,AB}$ is induced by a genuine global coupling $\pi^s$. We therefore impose the following necessary global positivity condition. If the local variables are induced by a genuine coupling $\pi^s$, then for every family of square-integrable test functions $\set{f_A \in L^2(\pi_A^s)}_{A \in \C} $, we have
\begin{align*}
    & \sum_{A\in\C}\int_{Z_A} f_A(z_A)^2 \dd \pi_A^s(z_A) + 2\sum_{AB\in[\C]_2}\int_{Z_A\times Z_B} f_A(z_A)f_B(z_B) \dd \pi_{AB}^s(z_A,z_B) \nonumber \\
    & = \pi^s\paren{\paren{\sum_{A \in \C}f_A(z_A)}^2} \ge 0. 
\end{align*}
In the finite-state or gridded setting, this condition is equivalent to the PSD constraint
\begin{align}\label{constraint-PSD Mar}
    \begin{bmatrix}
        \operatorname{Diag}(\pi_{A_1}^s) & \pi_{A_1A_2}^s & \cdots & \pi_{A_1A_K}^s\\
        (\pi_{A_1A_2}^{s})^\top & \operatorname{Diag}(\pi_{A_2}^s) & \cdots & \pi_{A_2A_K}^s\\ 
        \vdots & \vdots & \ddots & \vdots\\ 
        (\pi_{A_1A_K}^s)^{\top} & (\pi_{A_2A_K}^s)^{\top} & \cdots & \operatorname{Diag}(\pi_{A_K}^s)
    \end{bmatrix}
    \succeq 0, \quad s=0,\ldots, T-1,
\end{align}
where $K = |\C|$ is the number of clusters and $\operatorname{Diag}(\pi_{A_k}^s)$ denotes the diagonal matrix with diagonal entries given by $\pi_{A_k}^s$. We write this compactly as $(\pi_A^s,\pi_{AB}^s)_{A \in \C, AB \in [\C]_2}\succeq 0$.

Combining the preceding constraints gives the full marginal relaxation:
\begin{align}
    \min_{\set{\pi^s_A}_{\C}, \set{\pi^s_{AB}}_{[\C]_2}} \; & \sum_{s=0}^{T-1} \bracket{\sum_{A\in\C}\int_{Z_A}c_A^s\dd \pi_A^s+\sum_{AB\in\E}\int_{Z_A\times Z_B}c_{AB}^s\dd \pi_{AB}^s}
    \label{problem:Mar} \tag{Mar}\\
    \st \quad &  \set{\pi_A^s, \pi_B^s, \pi_{AB}^s}_{AB \in [\C]_2} \text{ is consistent \eqref{constraint-local consis Mar}}, \label{constraint-local consis Mar-copy}\tag{Mar-a}\\
    & \text{initial and terminal constraints \eqref{constraint-endpoint Mar} for } \set{\pi_{AB}^i}_{\E}, \; i=0 \text{ or } T-1,
    \label{constraint-endpoint Mar-copy}\tag{Mar-b}\\
    & \text{mass conservation \eqref{constraint-time consis Mar} for } \set{\pi_{AB}^s}_{\E}, 
    \label{constraint-time consis Mar-copy} \tag{Mar-c}\\
    & \text{Markov kernel constraints \eqref{constraint-control Mar} for } \set{\pi_A^s, \pi_{AB}^s}_{\C, [\C]_2}, 
    \label{constraint-control Mar-copy} \tag{Mar-d}\\
    & \text{local positivity \eqref{constraint-local positivity Mar} for } \set{\pi_{AB}^s}_{[\C]_2},  
    \label{constraint-local positivity Mar-copy} \tag{Mar-e} \\
    & \text{global positivity \eqref{constraint-PSD Mar}: } (\pi_{A}^s,\pi_{AB}^s)_{\C, \, [\C]_2}\succeq 0. 
    \label{constraint-PSD Mar-copy} \tag{Mar-f}
\end{align}
If $\cD^s_{\mathrm{Mar}}$ is described by affine equalities and inequalities, then \eqref{problem:Mar} is a doubly nonnegative block semidefinite program (SDP): it combines pairwise entrywise nonnegativity with a PSD constraint for each time interval. 

Following the approach in \cite{khoo2025}, we also consider two simpler relaxations of \eqref{problem:Mar}. The first simplification drops the global positivity constraints \eqref{constraint-PSD Mar-copy} and retains pair variables only on the edge set $\E$. Accordingly, the marginal consistency, Markov kernel, and local positivity constraints are imposed only for $AB \in \E$. The non-edge variables $\pi_{AB}^s$ with $AB\notin\E$ are then omitted. This gives
\begin{align}
    \min_{\set{\pi^s_A}_{\C}, \set{\pi^s_{AB}}_{\E}} \; & \sum_{s=0}^{T-1}\bracket{\sum_{A\in\C}\int_{Z_A}c_A^s\dd \pi_A^s+\sum_{AB\in\E}\int_{Z_A\times Z_B}c_{AB}^s\dd \pi_{AB}^s} 
    \label{problem:Mar1} \tag{$\mathrm{Mar}^1$}\\
    \st \quad & \set{\pi_A^s, \pi_B^s, \pi_{AB}^s}_{AB \in \E} \text{ is consistent \eqref{constraint-local consis Mar}},  
    \label{constraint-endpoint Mar1}\tag{$\mathrm{Mar}^1$-a}\\
    & \text{initial and terminal constraints \eqref{constraint-endpoint Mar} for } \set{\pi_{AB}^i}_{\E}, \; i = 0 \text{ or } T-1,
    \label{constraint-local consis Mar1}\tag{$\mathrm{Mar}^1$-b}\\
    & \text{mass conservation \eqref{constraint-time consis Mar} for } \set{\pi_{AB}^s}_{\E},
    \label{constraint-time consis Mar1} \tag{$\mathrm{Mar}^1$-c}\\
    & \text{projected Markov kernel constraints: } \set{\pi_A^s, \pi_{AB}^s}_{\C, \, \E} \in \cD^s_{\mathrm{Mar}, \E},
    \label{constraint-control Mar1} \tag{$\mathrm{Mar}^1$-d}\\
    & \text{local positivity \eqref{constraint-local positivity Mar} for } \set{\pi_{AB}^s}_{\E}, 
    \label{constraint-local positivity Mar1} \tag{$\mathrm{Mar}^1$-e} 
\end{align}
Here $\cD^s_{\mathrm{Mar}, \E}$ in \eqref{constraint-control Mar1} denotes the projection of the constraint set $\cD^s_{\mathrm{Mar}}$ onto the retained variables. When $\Omega$ is finite and these constraints are affine, \eqref{problem:Mar1} reduces to a linear program.

The second simplification drops the local positivity constraints \eqref{constraint-local positivity Mar-copy} while retaining the global positivity constraints. The resulting relaxation is
\begin{align}
    \min_{\set{\pi^s_A}_{\C}, \set{\pi^s_{AB}}_{[\C]_2}} \; & \sum_{s=0}^{T-1} \bracket{\sum_{A\in\C}\int_{Z_A}c_A^s\dd \pi_A^s+\sum_{AB\in\E}\int_{Z_A\times Z_B}c_{AB}^s\dd \pi_{AB}^s}
    \label{problem:Mar2} \tag{$\mathrm{Mar}^2$}\\
    \st \quad & \set{\pi_A^s, \pi_B^s, \pi_{AB}^s}_{AB \in [\C]_2} \text{ is consistent \eqref{constraint-local consis Mar}}, \label{constraint-local consis Mar2}\tag{$\mathrm{Mar}^2$-a}\\
    & \text{initial and terminal constraints \eqref{constraint-endpoint Mar} for } \set{\pi_{AB}^i}_{ \E}, i = 0 \text{ or } T-1,
    \label{constraint-endpoint Mar2}\tag{$\mathrm{Mar}^2$-b}\\
    & \text{mass conservation \eqref{constraint-time consis Mar} for } \set{\pi_{AB}^s}_{ \E}, 
    \label{constraint-time consis Mar2} \tag{$\mathrm{Mar}^2$-c}\\
    & \text{Markov kernel constraints \eqref{constraint-control Mar} for } \set{\pi_A^s, \pi_{AB}^s}_{\C,\, [\C]_2}, 
    \label{constraint-control Mar2} \tag{$\mathrm{Mar}^2$-d}\\
    & \text{global positivity \eqref{constraint-PSD Mar}: } (\pi_{A}^s,\pi_{AB}^s)_{\C, \,[\C]_2}\succeq 0. 
    \label{constraint-PSD Mar2} \tag{$\mathrm{Mar}^2$-e}
\end{align}
The variables $\pi_{AB}^s$ are now allowed to be signed measures. The global positivity constraints still imply $\pi_A^s\ge 0$, because each diagonal block in the finite-dimensional PSD matrix is $\operatorname{Diag}(\pi_A^s)\succeq 0$. However, they do not imply pointwise nonnegativity of the pair variables $\pi_{AB}^s$.

\subsection{Cluster moment relaxation}\label{subsec:moment relaxation}
We now introduce the cluster moment relaxation. As in the marginal relaxation, we fix a cluster decomposition $\C$ and the corresponding cluster graph $\G = (\C, \E)$. The moment relaxation replaces the local marginals used in the marginal relaxation by finitely many moments against prescribed cluster bases. We first define the cluster bases.

Fix a degree $r \in \mathbb{N}^+$. For each coordinate $i \in [d]$, choose a finite one-dimensional basis of order at most $r$, denoted by $\set{\phi_{i,j}: \Omega_i \rightarrow \mathbb{C}}_{j=0}^r$, where $\phi_{i,0} \equiv 1$. Typical choices include monomial bases, orthogonal polynomial bases, Fourier bases, or other bases adapted to the one-dimensional domain $\Omega_i$. Recall from \Cref{subsec:2.1 notation} that, for a cluster $A \in \C$, $z_A = (x_A, y_A) \in \Omega_A \times \Omega_A$, where $x_A$ and $y_A$ denote the variables of the left time layer $t_s$ and right time layer $t_{s+1}$, respectively. For a multi-index 
\begin{align*}
    \alpha = (\alpha^x, \alpha^y), \quad \alpha^x = (\alpha_i^x)_{i \in A}, \quad \alpha^y = (\alpha_i^y)_{i \in A}
\end{align*}
we define its total degree by
\[
    |\alpha| = \sum_{i \in A} \alpha_i^x + \sum_{i\in A} \alpha_i^y.
\]
Let $\mathcal{I}_A^r = \set{\alpha = (\alpha^x, \alpha^y) \mid |\alpha| \le r}$. For each $\alpha \in \mathcal{I}_A^r$, define the cluster basis function on $Z_A$ by
\begin{align*}
    \Phi_{A,\alpha}(z_A) = \prod_{i\in A}\phi_{i, \alpha_i^x}(x_i) \phi_{i, \alpha_i^y}(y_i).
\end{align*}
We collect these functions into a column vector $\Phi_A = \paren{\Phi_{A, \alpha}}_{\alpha \in \mathcal{I}_A^r}$, which defines the two-layer cluster basis associated with cluster $A$. This basis consists of functions supported only on the variables $z_A = (x_A, y_A)$ and having total degree at most $r$. For example, if $A = \set{1}$, $r = 2$ and $\phi_{1,j}(u_1) = u_1^j$, then, up to ordering, $\Phi_A = \paren{1, x_1, x_1^2, y_1, y_1^2, x_1y_1}^\top$.

We now define the moment variables. For each time interval $s = 0, \ldots, T-1$ and each cluster $A \in \C$, introduce the one-cluster moment matrix:
\begin{align}\label{eq:MA}
    M_A^s \coloneqq \pi_A^s(\Phi_A \Phi_A^*) = \int_{Z_A}\Phi_A(z_A)\Phi_A(z_A)^* \dd \pi_A^s(z_A),
\end{align}
where the integral is understood entrywise and $\paren{\cdot}^*$ denotes the Hermitian transpose. Similarly, for each cluster pair $AB \in [\C]_2$, the cross-moment matrix is defined as
\begin{align}\label{eq:MAB}
    M_{AB}^s \coloneqq \pi_{AB}^s(\Phi_A \Phi_B^*) = \int_{Z_A \times Z_B}\Phi_A(z_A)\Phi_B(z_B)^* \dd \pi_{AB}^s(z_A,z_B).
\end{align}
The decision variables in the moment relaxation are the collections
\begin{align}\label{notation:reduced var Mom}
    \{M_A^s\}_{A\in\C}, \quad \{M_{AB}^s\}_{AB\in \bracket{\C}_2}, \quad s = 0, \ldots, T-1.
\end{align}
These variables at each time step $s$ can be assembled into a single Hermitian matrix
\begin{align}\label{def:Ms}
    M^s:=
    \begin{bmatrix}
        M_{A_1}^s & M_{A_1A_2}^s & \cdots & M_{A_1A_K}^s \\ 
        \paren{M_{A_1A_2}^s}^* & M_{A_2}^s & \cdots & M_{A_2A_K}^s\\ 
        \vdots & \vdots & \ddots & \vdots\\ 
        \paren{M_{A_1A_K}^s}^* & \paren{M_{A_2A_K}^s}^* & \cdots & M_{A_K}^s
    \end{bmatrix}.
\end{align}
The overall decision variable is the block-diagonal matrix $M = \mathrm{Diag} \paren{M^0, \cdots, M^{T-1}}$.

We now describe the objective and constraints in terms of these moment variables. Recall from the previous subsection that we represent the one-step cost $c^s$ via a local decomposition \eqref{eq:relaxed cost Mar}:
\begin{align*}
    c^s(z) = \sum_{A\in\C} c_{A}^s(z_A) + \sum_{AB\in\E} c_{AB}^s(z_A,z_B).
\end{align*}
Assume that each local cost component lies in the span of the corresponding product basis, or is replaced by its projection onto that span. Let $C_A^s$ and $C_{AB}^s$ denote the representations of $c_A^s$ and $c_{AB}^s$ in the bases formed by the elements of $\Phi_A \Phi_A^*$ and $\Phi_A \Phi_B^*$, respectively. Specifically, 
\begin{align*}
    c_A^s=\langle C_A^s,\Phi_A\Phi_A^*\rangle, \quad c_{AB}^s =\langle C_{AB}^s,\Phi_A\Phi_B^*\rangle.
\end{align*}
In the exact case, the local objective at step $s$ can be written as
\begin{align} \label{eq:cost decomp moment}
    \int_{\Omega \times \Omega}c^s(x,y) \dd \pi^s(x,y) = \sum_{A\in\C}\langle C_{A}^s,M_A^s\rangle+\sum_{AB\in\E}\langle C_{AB}^s,M_{AB}^s\rangle.
\end{align}
We next impose constraints on the reduced moment variables. By the definition of selection operators in \eqref{def:RxRy}, ${\rm R}_x(\Phi_A\Phi_B^*)$ and ${\rm R}_y(\Phi_A\Phi_B^*)$ extract the submatrices of entries in $\Phi_A\Phi_B^*$ that depend exclusively on the left-layer variables $(x_A, x_B)$ and right-layer variables $(y_A,y_B)$, respectively. For each time interval $s = 0, \ldots, T-1$, cluster $A \in \C$ and cluster pair $AB \in [\C]_2$, we define the following shorthand notation:
\begin{equation}
    \begin{split}
        (M_A^s)_{\rm L} \coloneqq \pi_A^s({\rm R}_x(\Phi_A\Phi_A^*)), & \quad (M_A^s)_{\rm R} \coloneqq \pi_A^s({\rm R}_y(\Phi_A\Phi_A^*)), \\[1ex]
        (M_{AB}^s)_{\rm L} \coloneqq \pi_{AB}^s({\rm R}_x(\Phi_A\Phi_B^*)), & \quad (M_{AB}^s)_{\rm R} \coloneqq \pi_{AB}^s({\rm R}_y(\Phi_A\Phi_B^*)).
    \end{split}
\end{equation}

\paragraph{1. Moment consistency constraints.}
For each cluster $A\in\C$, the one-cluster moment matrix $M_A^s=\pi_A^s(\Phi_A\Phi_A^*)$ contains entries indexed by products of basis functions in the same variables $z_A=(x_A,y_A)$. Different pairs of basis functions may therefore generate the same product function. For example, if $A=\{1\}$ and the one-dimensional basis is the monomial basis $\phi_{1,j}(u_1)=u_1^j$, then the products $x_1\cdot x_1^3$ and $x_1^2\cdot x_1^2$ both represent the same function $x_1^4$. The corresponding entries of $M_A^s$ must therefore be equal. These identities give linear equality constraints on $M_A^s$, for every $A\in\C$ and $s=0,\ldots,T-1$. We denote them compactly by saying that 
\begin{align}\label{constraint-consis Mom}
    M^s \text{ is consistent, } \quad s = 0, \ldots, T-1.
\end{align}
Note that no analogous constraint is imposed on the cross-moment matrix $M_{AB}^s=\pi_{AB}^s(\Phi_A\Phi_B^*)$ since the functions in $\Phi_A$ and $\Phi_B$ are supported on disjoint coordinate sets. 

\paragraph{2. Initial and terminal constraints.}
The initial and terminal constraints \eqref{constraint-endpoint Mar} of the marginal relaxation are further relaxed at the moment level to hold only for one-cluster moments and retained cluster-pair moments. Explicitly, we have
\begin{equation}\label{constraint-endpoint Mom}
    \begin{split}
        (M_A^0)_{\rm L}=\mu_A\left({\rm R}_x(\Phi_A\Phi_A^*)\right), & \quad (M_A^{T-1})_{\rm R}=\nu_A\left({\rm R}_y(\Phi_A\Phi_A^*)\right), \quad A\in\C \\[1ex]
        (M_{AB}^0)_{\rm L}=\mu_{AB}\left({\rm R}_x(\Phi_A\Phi_B^*)\right), & \quad (M_{AB}^{T-1})_{\rm R}=\nu_{AB}\left({\rm R}_y(\Phi_A\Phi_B^*)\right), \quad AB\in\E.
    \end{split}
\end{equation}

\paragraph{3. Mass conservation constraints.}
The mass conservation constraints \eqref{constraint-time consis Mar} are relaxed to the moment level for all clusters $A \in \C$ and retained cluster pairs $AB \in \E$:
\begin{equation}\label{constraint-time consis Mom}
    \begin{split}
        (M_A^s)_{\rm R} & =(M_A^{s+1})_{\rm L}, \quad A\in\C, \quad s=0,\ldots,T-2 \\
        (M_{AB}^s)_{\rm R} & =(M_{AB}^{s+1})_{\rm L}, \quad AB\in\E, \quad s=0,\ldots,T-2.
    \end{split}
\end{equation}

\paragraph{4. Markov kernel constraints.}
The Markov kernel constraints \eqref{constraint-control Mar} are imposed at the moment level through a convex set $\cD^s_{\mom}$:
\begin{align}\label{constraint-control Mom}
    \{M_A^s,M_{AB}^s\}_{A \in \C, \, AB \in [\C]_2}\in\cD^s_{\mom}, \quad s=0,\ldots,T-1. 
\end{align}
This is the moment-level relaxation of the admissible transition constraint $\pi^s\in\cD^s$. For instance, if coordinate $i$ is required to remain frozen at step $s$, then the support constraint $x_i-y_i=0$ can be imposed by linear equations of the form
\begin{align*}
    \int (x_i-y_i)q(z)\dd \pi^s(z)=0
\end{align*}
for all retained test functions $q$ such that $(x_i-y_i)q$ lies in the span of the retained product basis.

\paragraph{5. Local positivity constraints.}
The local positivity constraints \eqref{constraint-local positivity Mar} require that, for each pair $AB \in [\C]_2$, the triple $(M_A^s, M_B^s, M_{AB}^s)$ is compatible with a nonnegative measure on $Z_A \times Z_B$. We impose them through a convex relaxation of local moment representability. For each $AB\in[\C]_2$, let $\mathcal{K}_{AB}^{\mathrm{loc}}$ denote the chosen convex set of admissible local moment triples. We impose
\begin{align}\label{constraint-local positivity Mom}
    (M_A^s,M_B^s,M_{AB}^s)\in\mathcal{K}_{AB}^{\mathrm{loc}}, \quad AB\in[\C]_2, \quad s=0,\ldots,T-1.
\end{align}

\paragraph{6. Global positivity constraints.}
The global positivity constraints \eqref{constraint-PSD Mar} are relaxed by restricting the square-integrable test functions to the span of the cluster bases. Consider the function set $\mathcal{F}= \set{f= \sum_{A \in \C} v_A^*\Phi_A(z_A) \mid v_A \in \mathbb{C}^{|\Phi_A|}}$. If the moments are induced by a genuine coupling $\pi^s$, then, for every $f \in \mathcal{F}$, we have
\begin{align*}
    \sum_{A\in\C}v_A^* M_A^s v_A+2\mathrm{Re}\sum_{AB\in[\C]^2}v_A^* M_{AB}^s v_B = \pi^s\paren{|f|^2} \ge 0.
\end{align*}
This is equivalent to requiring the moment matrix defined in \eqref{def:Ms} to be positive semidefinite:
\begin{align}\label{constraint-PSD Mom}
    M^s\succeq 0, \quad s=0,\ldots,T-1.
\end{align}

Combining these constraints gives the cluster moment relaxation:
\begin{align}
    \min_{\set{M^s}_{s=0}^{T-1}} \; & \sum_{s=0}^{T-1} \bracket{\sum_{A\in\C}\langle C_{A}^s,M_A^s\rangle+\sum_{AB\in\E}\langle C_{AB}^s,M_{AB}^s\rangle} \label{problem:Mom} \tag{$\mom$}\\
    \st & M^s \text{ is consistent  \eqref{constraint-consis Mom}},
    \label{constraint-consis Mom-copy}\tag{$\mom$-a} \\
    & \text{initial and terminal constraints \eqref{constraint-endpoint Mom} for } \set{M_A^i, M_{AB}^i}_{\C, \, \E}, \; i = 0 \text{ or } T-1,
    \label{constraint-endpoint Mom-copy}\tag{$\mom$-b} \\
    & \text{mass conservation \eqref{constraint-time consis Mom} for } \set{M_A^s, M_{AB}^s}_{\C, \, \E},
    \label{constraint-time consis Mom-copy} \tag{$\mom$-c}\\
    & \text{Markov kernel constraints \eqref{constraint-control Mom} for } \set{M_A^s, M_{AB}^s}_{\C, \, [\C]_2},
    \label{constraint-control Mom-copy} \tag{$\mom$-d} \\
    & \text{local positivity \eqref{constraint-local positivity Mom}: }(M_A^s,M_B^s,M_{AB}^s)\in\mathcal{K}_{AB}^{\mathrm{loc}}, \quad AB\in[\C]_2,
    \label{constraint-local positivity Mom-copy}\tag{$\mom$-e} \\
    & \text{global positivity \eqref{constraint-PSD Mom}: } M^s\succeq 0.
    \label{constraint-PSD Mom-copy} \tag{$\mom$-f}
\end{align}
This is a semidefinite relaxation of the sequential-coupling problem. As in the marginal case, we obtain two further relaxations by dropping either the global positivity constraints $M^s \succeq 0$ or the local positivity constraints $(M_A^s,M_B^s,M_{AB}^s)\in \mathcal{K}_{AB}^{\mathrm{loc}}$.

\begin{align}
    \min_{\substack{\set{M^s_A}_\C \\ \set{M_{AB}^s}_\E}} \; & \sum_{s=0}^{T-1} \bracket{\sum_{A\in\C}\langle C_{A}^s,M_A^s\rangle+\sum_{AB\in\E}\langle C_{AB}^s,M_{AB}^s\rangle} 
    \label{problem:Mom1} \tag{$\mom^1$}\\
    \st & M^s \text{ is consistent \eqref{constraint-consis Mom}},
    \label{constraint-consis Mom1} \tag{$\mom^1$-a}\\
    & \text{initial and terminal constraints \eqref{constraint-endpoint Mom} for } \set{M_A^i, M_{AB}^i}_{\C, \, \E} ,\; i = 0 \text{ or } T-1,
    \label{constraint-endpoint Mom1}\tag{$\mom^1$-b}\\
    & \text{mass conservation \eqref{constraint-time consis Mom} for } \set{M_A^s, M_{AB}^s}_{\C, \, \E},
    \label{constraint-time consis Mom1} \tag{$\mom^1$-c}\\
    & \text{projected Markov kernel constraints: }\{M_A^s,M_{AB}^s\}_{\C, \, \E} \in \cD^s_{\mom,\E},
    \label{constraint-control Mom1} \tag{$\mom^1$-d}\\
    & \text{local positivity \eqref{constraint-local positivity Mom}: }(M_A^s,M_B^s,M_{AB}^s)\in\mathcal{K}_{AB}^{\mathrm{loc}}, \quad AB\in \E .
    \label{constraint-local positivity Mom1} \tag{$\mom^1$-e}
\end{align}
Problem \eqref{problem:Mom1} is obtained from \eqref{problem:Mom} by dropping the global positivity constraints and discarding the cross-moment variables $M^s_{AB}$ for $AB \notin \E$. The set $\cD_{\mom,\E}^s$ denotes the projection of the constraint set $\cD_{\mom}^s$ onto the retained variables. Dropping the local positivity constraints \eqref{constraint-local positivity Mom-copy} from \eqref{problem:Mom} yields the following relaxation:
\begin{align}
    \min_{\set{M^s}_{s=0}^{T-1}} \; & \sum_{s=0}^{T-1} \bracket{\sum_{A\in\C}\langle C_{A}^s,M_A^s\rangle+\sum_{AB\in\E}\langle C_{AB}^s,M_{AB}^s\rangle} \label{problem:Mom2} \tag{$\mom^2$}\\
    \st & M^s \text{ is consistent  \eqref{constraint-consis Mom}},
    \label{constraint-consis Mom2}\tag{$\mom^2$-a} \\
    & \text{initial and terminal constraints \eqref{constraint-endpoint Mom} for } \set{M_A^i, M_{AB}^i}_{\C, \, \E}, \; i=0 \text{ or } T-1,
    \label{constraint-endpoint Mom2}\tag{$\mom^2$-b} \\
    & \text{mass conservation \eqref{constraint-time consis Mom} for } \set{M_A^s, M_{AB}^s}_{\C, \, \E},
    \label{constraint-time consis Mom2} \tag{$\mom^2$-c}\\
    & \text{Markov kernel constraints \eqref{constraint-control Mom} for } \set{M_A^s, M_{AB}^s}_{\C, \, [\C]_2},
    \label{constraint-control Mom2} \tag{$\mom^2$-d} \\
    & \text{global positivity \eqref{constraint-PSD Mom}: } M^s\succeq 0.
    \label{constraint-PSD Mom2} \tag{$\mom^2$-e}
\end{align}

\section{Recovering Benamou--Brenier dynamics from problem \eqref{problem:coupling}}\label{sec:3}

In this section, we show that the sequential-coupling problem \eqref{problem:coupling} contains the Benamou--Brenier problem \eqref{problem:BB} as a special case. We work on a convex set $\Omega\subset\R^d$, and let $\mu, \nu \in \mathcal{P}_p(\Omega)$, where $\mathcal{P}_p(\Omega)$ denotes the set of probability measures on $\Omega$ with finite $p$-th moment. Fix a time grid $0 = t_0 < t_1 < \cdots < t_T =1$. For $1\le p<\infty$, choose the one-step cost
\begin{align}\label{eq:dot-cost 3.1}
    c^s(x,y)=\frac{\|x-y\|^p}{(\Delta t_s)^{p-1}}, \quad \Delta t_s=t_{s+1}-t_s.
\end{align}
We impose no Markov kernel constraints. In the notation of problem \eqref{problem:coupling}, this means $\cD^s = \mathcal{P}(\Omega \times \Omega)$ in \eqref{constraint-control coupling}. With these choices, problem \eqref{problem:coupling} becomes:
\begin{align}
    \inf_{\set{\pi^s}_{s=0}^{T-1}} \;  &   \sum_{s=0}^{T-1} \int_{\Omega\times\Omega} \frac{\|x-y\|^p}{(\Delta t_s)^{p-1}} \dd \pi^s(x,y) \label{problem:dot} \\
    \st & \PL \pi^0 = \mu, \quad \PR \pi^{T-1} = \nu, \label{constraint-endpoint dot} \tag{3.2-a}\\
    & \PR \pi^s = \PL \pi^{s+1}, \quad s=0,\ldots,T-2, \label{constraint-time consis dot} \tag{3.2-b}\\
    & \pi^s \ge 0, \quad s=0,\ldots,T-1. \label{constraint-positivity dot} \tag{3.2-c}
\end{align}

This section shows how solving \eqref{problem:dot} recovers an optimal solution of the Benamou--Brenier problem \eqref{problem:BB}. In \Cref{subsec:3.1}, we prove that these two problems have the same optimal value and that a primal solution of \eqref{problem:dot} yields the Benamou--Brenier measure curve at the prescribed grid times. In \Cref{subsec:3.2}, under a standard connected-support condition, we show that a dual solution of \eqref{problem:dot} recovers the Benamou--Brenier velocity field at the prescribed grid times. Finally, in \Cref{subsec:3.3}, we explain how to extract an approximate velocity field from the moment relaxation for \eqref{problem:dot}.

\subsection{Recovering the measure curve in \eqref{problem:BB} from primal solution of \eqref{problem:dot}}\label{subsec:3.1}
In this subsection, \Cref{prop:3.1} shows that an optimal solution of \eqref{problem:dot} recovers the values of a Benamou--Brenier measure curve at the prescribed grid times. We also show that \eqref{problem:dot} has the same optimal value as the Benamou--Brenier problem \eqref{problem:BB}. 

To prove the result, we first introduce the intermediate distributions induced by a feasible sequence of couplings. For any feasible sequence $\set{\pi^s}_{s=0}^{T-1}$, define
\begin{align*}
    \rho^0 \coloneqq \PL \, \pi^0, \quad \rho^{s+1} \coloneqq \PR \, \pi^s, \quad s = 0, \ldots, T-1.
\end{align*}
By the initial and terminal constraints \eqref{constraint-endpoint dot} and the mass-conservation constraints \eqref{constraint-time consis dot}, we have
\begin{align*}
    \rho^0 = \mu, \quad \rho^T = \nu, \quad \pi^s \in \Pi(\rho^s, \rho^{s+1}), \quad s = 0, \ldots, T-1,
\end{align*}
where $\Pi(\rho^s, \rho^{s+1})$ denotes the set of all couplings between $\rho^s$ and $\rho^{s+1}$. Conversely, any sequence $\set{\rho^s}_{s=0}^T$ with $\rho^0 = \mu$ and $\rho^T = \nu$, together with couplings $\pi^s\in \Pi(\rho^s, \rho^{s+1})$, defines a feasible solution of \eqref{problem:dot}. Therefore, minimizing \eqref{problem:dot} over the sequence of couplings is equivalent to first choosing the intermediate marginals at the prescribed grid times $\set{\rho^s}_{s=0}^T$, and then choosing an optimal coupling between each pair of consecutive marginals. 

For fixed $\rho^{s}$ and $\rho^{s+1}$, minimizing over $\pi^s \in \Pi(\rho^s, \rho^{s+1})$ gives
\begin{align*}
    \inf_{\pi^s\in\Pi(\rho^s,\rho^{s+1})}\int_{\Omega\times\Omega}\frac{\|x-y\|^p}{(\Delta t_s)^{p-1}}\dd \pi^s(x,y)=\frac{1}{(\Delta t_s)^{p-1}} \W_p^p(\rho^s,\rho^{s+1}).
\end{align*}
Thus \eqref{problem:dot} is equivalent to the reduced form
\begin{align}\label{problem:dot'}
    \inf_{\set{\rho^s \in \mathcal{P}_p(\Omega)}_{s=0}^T}\; \sum_{s=0}^{T-1}\frac{1}{(\Delta t_s)^{p-1}}\W_p^p(\rho^s,\rho^{s+1}),\qquad \rho^0=\mu,\qquad \rho^T=\nu. \tag{3.2'}
\end{align}
We call an optimal measure curve $\{q(\plh, t)\}_{t\in[0,1]}$ in \eqref{problem:BB} a Benamou--Brenier geodesic between $\mu$ and $\nu$. The following proposition explains how to recover the Benamou--Brenier geodesic from the solution of \eqref{problem:dot}.

\begin{prop}\label{prop:3.1}
    Let $1\le p<\infty$. Let $\Omega \subseteq \R^d$ be convex, and let $\mu, \nu \in \mathcal{P}_p(\Omega)$. The problem \eqref{problem:dot}, equivalently the reduced form \eqref{problem:dot'}, has optimal value $\W_p^p(\mu,\nu)$. Explicitly:
    \begin{equation}\label{eq:prop 3.1-part 1}
        \min_{\set{\pi_s}_{s=0}^{T-1}} \; \sum_{s = 0}^{T-1} \int_{\Omega \times \Omega} \frac{\|x-y\|^p}{\Delta t_s^{p-1}} \dd \pi^s(x,y) = \W_p^p(\mu, \nu)
    \end{equation}
    where the infimum is subject to constraints \eqref{constraint-endpoint dot}--\eqref{constraint-positivity dot}. Moreover, if $\set{q(\cdot, t),v(\cdot, t)}_{t \in [0,1]}$ is an optimal solution of  \eqref{problem:BB}, then the grid distributions
    \begin{align}
        \rho^s \coloneqq q(\cdot, t_s), \quad s=0,\ldots,T,
    \end{align}
    solve the reduced problem \eqref{problem:dot'}. Conversely, any minimizer $\set{\rho^s}_{s=0}^T$ of problem \eqref{problem:dot'} can be connected on each interval $[t_s,t_{s+1}]$ by Benamou--Brenier geodesics to obtain an optimal solution of the problem \eqref{problem:BB}.
\end{prop}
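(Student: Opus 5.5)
The plan is to prove two matching inequalities for the reduced problem \eqref{problem:dot'}. I will use two standard facts about $\W_p$ on a convex domain. The first is the triangle inequality. The second is that $\W_p$ admits constant-speed geodesics, obtained by displacement interpolation $(( 1-\lambda){\rm P}_{\rm L}+\lambda {\rm P}_{\rm R})_\sharp \gamma$ of an optimal plan $\gamma$. These interpolants remain in $\mathcal{P}_p(\Omega)$ by convexity. I will also use the Benamou--Brenier theorem, namely that \eqref{problem:BB} equals $\W_p^p(\mu,\nu)$, together with its time-rescaled version: on an interval $[a,b]$, the minimal action $\int_a^b\int\|v\|^p\dd q\dd t$ connecting $\rho,\rho'$ equals $\W_p^p(\rho,\rho')/(b-a)^{p-1}$.

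\textbf{Lower bound.} For any admissible $\{\rho^s\}$ with $\rho^0=\mu$ and $\rho^T=\nu$, the triangle inequality gives $\W_p(\mu,\nu)\le\sum_s \W_p(\rho^s,\rho^{s+1})$. Writing $\W_p(\rho^s,\rho^{s+1}) = \Delta t_s\cdot \big(\W_p(\rho^s,\rho^{s+1})/\Delta t_s\big)$ and using $\sum_s\Delta t_s=1$, Jensen's inequality (convexity of $u\mapsto u^p$, equivalently Hölder) yields
\[
\W_p^p(\mu,\nu)\le \Big(\sum_s \Delta t_s \frac{\W_p(\rho^s,\rho^{s+1})}{\Delta t_s}\Big)^p\le \sum_s \Delta t_s\frac{\W_p^p(\rho^s,\rho^{s+1})}{\Delta t_s^{p}}=\sum_s\frac{\W_p^p(\rho^s,\rho^{s+1})}{\Delta t_s^{p-1}}.
\]

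\textbf{Upper bound, attainment, and the grid statement.} Take an optimal $(q,v)$ for \eqref{problem:BB}; for $p>1$ its existence comes from the displacement interpolation. Set $\rho^s=q(\cdot,t_s)$. Applying the rescaled Benamou--Brenier inequality to the restriction of $(q,v)$ to $[t_s,t_{s+1}]$ gives $\W_p^p(\rho^s,\rho^{s+1})/\Delta t_s^{p-1}\le\int_{t_s}^{t_{s+1}}\int\|v\|^p\dd q\dd t$. Summing over $s$ shows that the objective of \eqref{problem:dot'} is at most $\W_p^p(\mu,\nu)$. Combined with the lower bound, these grid values are optimal and the infimum equals $\W_p^p(\mu,\nu)$. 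The minimum in \eqref{eq:prop 3.1-part 1} is attained by choosing optimal couplings $\pi^s\in\Pi(\rho^s,\rho^{s+1})$, which exist because the cost is lower semicontinuous and the marginals lie in $\mathcal{P}_p$.

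\textbf{Converse.} Let $\{\rho^s\}$ be a minimizer of \eqref{problem:dot'}. On each $[t_s,t_{s+1}]$, choose a rescaled Benamou--Brenier geodesic from $\rho^s$ to $\rho^{s+1}$; its action equals $\W_p^p(\rho^s,\rho^{s+1})/\Delta t_s^{p-1}$. Concatenating these pieces produces a curve that is continuous at the grid times and satisfies the continuity equation weakly on $[0,1]$, since the weak formulations glue because the endpoint traces match. Its total action is the optimal value $\W_p^p(\mu,\nu)$, so it is optimal for \eqref{problem:BB}.

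The main obstacle is the technical treatment of the Benamou--Brenier facts being assumed. Rescaling time and restricting to or concatenating subintervals are routine. The delicate case is $p=1$: there the action is not strictly convex, and optimal velocity fields may need to be interpreted as measures. I would handle it by stating the \eqref{problem:BB} equivalence in the momentum formulation $m=vq$, where existence and the rescaling identity hold for all $1\le p<\infty$, and cite \cite{benamou2000,villani2009optimal} for these facts.
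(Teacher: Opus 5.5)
Your proof is correct. It differs from the paper's in how you obtain the lower bound $\W_p^p(\mu,\nu)\le D_T$.

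The paper proves both inequalities dynamically. For the upper bound it restricts and rescales an admissible Benamou--Brenier pair, exactly as you do. For the lower bound it joins an arbitrary admissible sequence $\rho^0,\ldots,\rho^T$ by rescaled Benamou--Brenier geodesics, concatenates them into an admissible pair from $\mu$ to $\nu$, and applies the Benamou--Brenier formula again.

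You get the lower bound statically, from the triangle inequality for $\W_p$ and Jensen's inequality with weights $\Delta t_s$. This route needs neither the existence of optimal pairs between intermediate marginals nor the gluing of continuity equations. It also gives extra information: for $p>1$, strict convexity forces $\W_p(\rho^s,\rho^{s+1})=\Delta t_s\,\W_p(\mu,\nu)$ at every minimizer, so minimizers are constant-speed samples along a $\W_p$-geodesic.

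The paper's route is more economical in one respect: a single concatenation construction serves both the lower bound and the converse clause. Since you still need concatenation for the converse, and Benamou--Brenier for the upper bound, your proof removes the dynamic machinery only from the lower bound. You can push your idea further. Evaluating the displacement interpolation $((1-t_s){\rm P}_{\rm L}+t_s{\rm P}_{\rm R})_\sharp\gamma$ at the grid times gives the upper bound and attainment statically as well. The value identity then no longer depends on Benamou--Brenier, which is needed only for the two ``moreover'' statements.

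You also handle attainment of the minimum via optimal couplings, which the paper takes for granted.

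Finally, your caution about $p=1$ is unnecessary here. Along a $\W_1$-optimal plan $\gamma$, the momentum $((1-t){\rm P}_{\rm L}+t{\rm P}_{\rm R})_\sharp\big((y-x)\gamma\big)$ is absolutely continuous with respect to $q(\cdot,t)$. Optimal pairs with genuine velocity fields therefore exist, and your argument applies verbatim.
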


\begin{proof}
    Let 
    \[
        D_T:= \min_{\set{\rho^s \in \mathcal{P}_p(\Omega)}_{s=0}^T} \sum_{s=0}^{T-1} \frac{1}{(\Delta t_s)^{p-1}} \W_p^p(\rho^s,\rho^{s+1}), \quad \rho^0=\mu,\quad \rho^T=\nu,
    \]
    be the optimal value of \eqref{problem:dot'}. We prove that $D_T=\W_p^p(\mu, \nu)$. 
    
    First we show $D_T\leq \W_p^p(\mu, \nu)$. Let $(q,v)$ be
    any admissible pair for the Benamou--Brenier problem, and set
    \[
        \rho^s:=q(\,\cdot \,, t_s), \quad s=0,\ldots,T.
    \]
    Fix $s\in\{0,\ldots,T-1\}$. On the interval $[t_s,t_{s+1}]$, define
    the rescaled curve on $[0,1]$ by
    \[
        \widetilde q^s(\, \plh ,\tau) = q(\, \plh , t_s+\tau\Delta t_s), \quad \widetilde v^s(\, \plh, \tau) = \Delta t_s\,v( \, \plh, t_s+\tau\Delta t_s), \quad \tau\in[0,1].
    \]
    Writing $t=t_s+\tau\Delta t_s$, we obtain
    \[
        \partial_\tau \widetilde q^s = \Delta t_s\,\partial_t q_t = -\Delta t_s\,\nabla\cdot(q \, v) = -\nabla\cdot(\widetilde q^s \widetilde v^s).
    \]
    Hence $(\widetilde q^s,\widetilde v^s)$ satisfies the continuity equation on $[0,1]$. The initial and terminal values of $\widetilde q^s$ are 
    \[
        \widetilde q^s (\, \plh, 0)=\rho^s, \quad \widetilde q^s(\, \plh, 1)=\rho^{s+1}.
    \]
    By the Benamou--Brenier formula on the unit interval,
    \[
        \W_p^p(\rho^s,\rho^{s+1}) \leq \int_0^1\int_{\Omega} \|\widetilde v^s\|^p \dd \widetilde q^s\dd \tau.
    \]
    Using the definition of $\widetilde v^s$ and the change of variables $t=t_s+\tau\Delta t_s$, we get
    \[
        \int_0^1\int_{\Omega} \|\widetilde v^s\|^p \dd \widetilde q^s \dd \tau = (\Delta t_s)^{p-1} \int_{t_s}^{t_{s+1}}\int_{\Omega} \|v\|^p\dd q\dd t.
    \]
    Therefore,
    \[
        \frac{1}{(\Delta t_s)^{p-1}} \W_p^p(\rho^s,\rho^{s+1}) \leq \int_{t_s}^{t_{s+1}}\int_{\Omega} \|v\|^p\dd q\dd t.
    \]
    Summing over $s = 0, \ldots, T-1$ and taking the infimum over all admissible Benamou--Brenier pairs $(q,v)$ gives $\cD_T\leq \W_p^p(\mu, \nu)$. 
    
    Conversely, let $\rho^0,\ldots,\rho^T$ be any admissible sequence for problem \eqref{problem:dot'}. For each $s=0,\ldots,T-1$, choose an optimal Benamou--Brenier pair on the unit interval connecting $\rho^s$ to $\rho^{s+1}$. Denote this pair by $(\widetilde q^s,\widetilde v^s)$. Thus
    \[
        \widetilde q^s(\, \plh, 0)=\rho^s,\quad \widetilde q^s(\, \plh, 1)=\rho^{s+1},
    \]
    and
    \[
        \int_0^1\int_{\Omega} \|\widetilde v^s\|^p \dd \widetilde q^s \dd \tau = \W_p^p(\rho^s,\rho^{s+1}).
    \]
    Rescale this optimal pair to the interval $[t_s,t_{s+1}]$ by setting
    \[
        q(\, \plh, t) = \widetilde q^s(\,\plh, \frac{t-t_s}{\Delta t_s}), \quad v(\, \plh, t) = \frac{1}{\Delta t_s} \widetilde v^s(\, \plh, \frac{t-t_s}{\Delta t_s}), \quad t\in[t_s,t_{s+1}].
    \]
    Then $(q, v)$ satisfies the continuity equation on $[t_s,t_{s+1}]$, with initial and terminal values $\rho^s$ and $\rho^{s+1}$. Its action on this interval is
    \[
        \int_{t_s}^{t_{s+1}}\int_{\Omega} \|v\|^p\dd q \dd t = \frac{1}{(\Delta t_s)^{p-1}} \W_p^p(\rho^s,\rho^{s+1}).
    \]
    Concatenating these rescaled optimal pairs over all intervals $[t_s,t_{s+1}]$ gives an admissible Benamou--Brenier pair from $\mu$ to $\nu$, with total action
    \[
        \sum_{s=0}^{T-1} \frac{1}{(\Delta t_s)^{p-1}} \W_p^p(\rho^s,\rho^{s+1}).
    \]
    Since the admissible sequence $\rho^0,\ldots,\rho^T$ was arbitrary, taking the infimum over all such sequences gives $\W_p^p(\mu,\nu) \leq D_T$. Therefore $D_T= \W_p^p(\mu,\nu)$. This proves \eqref{eq:prop 3.1-part 1}.

    The statement about minimizers follows from the same two constructions. If $(q,v)$ is optimal in \eqref{problem:BB}, then the grid values $\rho^s=q(\plh, t_s)$ attain $\W_p^p(\mu,\nu)$ in \eqref{problem:dot'}. Hence they solve \eqref{problem:dot'}. Conversely, if $\{\rho^s\}_{s=0}^T$ is optimal in \eqref{problem:dot'}, concatenating optimal Benamou--Brenier geodesics between the consecutive distributions gives a continuous-time admissible pair with total action $\W_p^p(\mu,\nu)$, hence an optimal Benamou--Brenier pair. This completes the proof.
\end{proof}
Thus, with $\cD^s = \mathcal{P}(\Omega \times \Omega)$ and the cost defined in \eqref{eq:dot-cost 3.1}, problem \eqref{problem:coupling} recovers a Benamou--Brenier geodesic at the prescribed grid times and introduces no time-discretization error in the optimal value. Applying the marginal or moment relaxations from \Cref{sec:conv relaxation} to this formulation then yields low-order statistics of the intermediate distributions and a lower bound for $\W_p^p$.

\subsection{Recovering the velocity in \eqref{problem:BB} from dual solution of \eqref{problem:dot}}\label{subsec:3.2}

In this subsection, we derive the dual of problem \eqref{problem:dot} and show that, under a standard connected-support condition, its dual potentials encode the Benamou–Brenier velocity field at the grid times. We restrict the presentation to the case $p=2$. Analogous formulas can be derived for $1 \le p < \infty$. The main statement is as follows.

\begin{prop}\label{prop:3.2}
Assume $p=2$. Let $\lambda^0$ and $\lambda^T$ be the dual variables associated with the initial and terminal constraints \eqref{constraint-endpoint dot}, respectively. For $1 \le s \le T-1$, let $\lambda^s$ be the dual variable associated with the mass conservation constraints \eqref{constraint-time consis dot}. The dual problem of \eqref{problem:dot} can be written as
    \begin{align}
        \sup_{\{\lambda^s\}_{s=0}^T} \,& \int_{\Omega}\lambda^T(x)\dd \nu(x)-\int_{\Omega}\lambda^0(x)\dd \mu(x) \label{problem:dual dot} \tag{dual 3.2}\\
        \textup{s.t. } \; & \frac{\|x-y\|^2}{\Delta t_s} - \lambda^{s+1}(y) + \lambda^s(x) \ge 0, \quad s=0,\ldots,T-1. \label{constraint-dual dot}\tag{dual 3.2-a}
    \end{align}
    Here the supremum is taken over all measurable functions $\lambda^s:\Omega\to\R$, with $\lambda^0\in L^1(\mu)$ and $\lambda^T\in L^1(\nu)$. 
    
    Moreover, let $(q,v)$ be an optimal pair for the Benamou–Brenier problem \eqref{problem:BB}, and set
    \begin{align*}
        \rho^s \coloneqq q(\plh, t_s), \quad s = 0,\ldots, T.
    \end{align*}
    Assume that the Benamou--Brenier dual problem (whose precise formulation is deferred to \Cref{lem:3.3}) admits a $C^1$ optimal solution and that each $\rho^s$ is absolutely continuous, with connected support and Lebesgue-negligible boundary. Then any two optimal dual families of \eqref{problem:dual dot} differ by a common additive constant at every time level, $\rho^s$-almost everywhere. After imposing the normalization $\int_\Omega \lambda^0\dd \mu = 0$, the optimal dual family is unique $\rho^s$-almost everywhere. Moreover, the optimal velocity field may be chosen so that, for $\rho^s$-almost every $x$,
    \begin{align}\label{eq:velocity recovery}
        v(x, t_s)=\frac{1}{2}\nabla\lambda^s(x),  \quad s=0,\ldots,T. 
    \end{align}
\end{prop}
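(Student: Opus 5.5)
The proof has three parts. We derive the dual by Lagrangian duality. We identify optimal duals with time-sampled Hopf--Lax (Hamilton--Jacobi) solutions via a Benamou--Brenier dual lemma. We then read off uniqueness and the velocity formula from the Kantorovich optimality conditions on each interval.

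\textbf{Step 1: the dual.} Attach multiplier $\lambda^0$ to $\PL\pi^0=\mu$ and $\lambda^T$ to $\PR\pi^{T-1}=\nu$. For $1\le s\le T-1$, attach $\lambda^s$ to $\PL\pi^{s}-\PR\pi^{s-1}=0$. Choose signs so that the Lagrangian reads
\[
\sum_{s}\int \Big(\tfrac{\|x-y\|^2}{\Delta t_s}+\lambda^s(x)-\lambda^{s+1}(y)\Big)\dd\pi^s+\int\lambda^T\dd\nu-\int\lambda^0\dd\mu .
\]
Minimizing over $\pi^s\ge 0$ gives $0$ exactly when \eqref{constraint-dual dot} holds, and $-\infty$ otherwise; this yields \eqref{problem:dual dot}.

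For strong duality and dual attainment I would not invoke an abstract theorem. Instead, I would reduce the problem to classical Kantorovich duality. Eliminate the intermediate potentials by inf-convolution: the constraints chain into $\lambda^T(y)-\lambda^0(x)\le \inf_{\text{paths}}\sum_s \|x_s-x_{s+1}\|^2/\Delta t_s$. By convexity of $\Omega$ and Jensen, this infimum equals $\|x-y\|^2$, attained on straight lines with points at times $t_s$. So the dual value is at most the Kantorovich dual for $\W_2^2(\mu,\nu)$, which equals $\W_2^2(\mu,\nu)$. Given a Kantorovich pair $(\varphi,\psi)$, define $\lambda^s$ by Hopf--Lax evolution of $-\varphi$ to time $t_s$. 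This is feasible by the semigroup property and attains the value, so by \Cref{prop:3.1} duality is tight.

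\textbf{Step 2: complementary slackness and the velocity.} Let $\{\lambda^s\}$ be optimal and let $\pi^s$ be the optimal coupling between $\rho^s$ and $\rho^{s+1}$ obtained from the geodesic. Zero duality gap forces equality in \eqref{constraint-dual dot} $\pi^s$-a.e. Then $x\mapsto \|x-y\|^2/\Delta t_s+\lambda^s(x)$ is minimized at $x$ for $\pi^s$-a.e. $(x,y)$. I would use \Cref{lem:3.3} to pass to the $C^1$ BB dual potential $\phi(\cdot,t)$, which solves $\partial_t\phi+\tfrac14\|\nabla\phi\|^2=0$ with $v=\tfrac12\nabla\phi$, and show that $\lambda^s=\phi(\cdot,t_s)$ is an optimal family. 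Since $\rho^s\ll$ Lebesgue, the first-order condition can be taken at interior points of differentiability: $\nabla\lambda^s(x)=2(y-x)/\Delta t_s$ on the support of $\rho^s$. Along BB geodesics, $(y-x)/\Delta t_s=v(x,t_s)$ because particles move at constant velocity. This gives \eqref{eq:velocity recovery} for $s<T$; the case $s=T$ follows symmetrically using the constraint at $s=T-1$ with the roles of $x$ and $y$ swapped.

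\textbf{Step 3: uniqueness.} This is the main obstacle. Take two optimal families. Each $\lambda^s$ satisfies, $\rho^s$-a.e., $\nabla\lambda^s=2v(\cdot,t_s)$. For interior $s$ this comes from both adjacent intervals; the forward identity of Step 2 suffices. The delicate point is showing that an arbitrary optimal $\lambda^s$ is differentiable $\rho^s$-a.e. I would handle it by replacing $\lambda^s$ with its $c$-transform regularization, which is semiconcave and hence a.e. differentiable (Rademacher/Alexandrov), and which agrees with $\lambda^s$ on the support of $\rho^s$ by slackness. Then the difference of two optimal families has zero gradient a.e. on the support of $\rho^s$. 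Connectedness of the support, together with the negligible boundary, makes the difference constant $\rho^s$-a.e. on each level. Equality in the slackness relation on $\pi^s$-a.e. pairs, whose marginals have full support, forces the constants at levels $s$ and $s+1$ to coincide. The normalization $\int\lambda^0\dd\mu=0$ then fixes the common constant.
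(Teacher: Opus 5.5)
Your proposal is correct, but it argues differently from the paper at each substantive step.

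The paper's route is as follows:
\begin{itemize}
\item It never proves dual attainment independently. It takes the assumed $C^1$ optimal Benamou--Brenier potential $\varphi$ and shows the samples $\varphi(\cdot,t_s)$ are feasible by integrating $\partial_t\varphi+\tfrac14\|\nabla\varphi\|^2\le 0$ along segments. Optimality then follows from the value identities of \Cref{lem:3.3} and \Cref{prop:3.1}.
\item It proves uniqueness by noting that each $(-\tilde\lambda^s,\tilde\lambda^{s+1})$ is an optimal Kantorovich pair between $\rho^s$ and $\rho^{s+1}$. It then cites the uniqueness theorem for Kantorovich potentials and chains the constants through the shared $\lambda^{s+1}$.
\item It reads the velocity directly off $v=\tfrac12\nabla\varphi$.
\end{itemize}

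Your route replaces each of these:
\begin{itemize}
\item You obtain attainment from a static Kantorovich pair by Hopf--Lax interpolation, using the semigroup property and convexity of $\Omega$.
\item You derive $v(\cdot,t_s)=\tfrac12\nabla\lambda^s$ from complementary slackness, the first-order condition at $\pi^s$-a.e.\ pairs, and the constant-speed structure of displacement interpolation.
\item You essentially reprove the cited uniqueness result via $c$-transform regularization.
\end{itemize}

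The paper's route is shorter and ties the grid potentials explicitly to the continuous Hamilton--Jacobi potential. Yours is more self-contained and shows that the $C^1$ hypothesis is not really needed, either for the existence of an optimal family or for the velocity formula. It also gives a precise meaning to $\nabla\lambda^s$ for a potential determined only $\rho^s$-a.e. The paper passes over this point when it writes $\nabla\tilde\lambda^s=\nabla\lambda^s$.

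A few details to tighten:
\begin{itemize}
\item \Cref{lem:3.3} gives only the inequality $\partial_t\varphi+\tfrac14\|\nabla\varphi\|^2\le 0$, not the Hamilton--Jacobi equation with equality. The inequality is all you need.
\item Your gradient identity holds $\rho^s$-a.e., not Lebesgue-a.e.\ on the interior of the support. Before invoking connectedness you need a short extra step. For example, $\rho^s$ charges every ball contained in the interior of the support, so the agreement set is dense there. Gradients of the forward $c$-transforms, which are semiconvex rather than semiconcave, are continuous on their sets of differentiability. Hence the gradients agree at every common differentiability point.
\item The regularized potential coincides with $\lambda^s$ only $\rho^s$-a.e., not on the whole support.
\item The identity $(y-x)/\Delta t_s=v(x,t_s)$ uses the freedom, granted by the statement, to choose $v$ at the grid times as the displacement velocity.
\end{itemize}
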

Thus, in this special case under the above assumptions, we can recover the Benamou–Brenier velocity field at the grid times from the gradients of the dual potentials. When this velocity field generates a flow map, the associated Markov kernel in problem \eqref{problem:1.1} is obtained from the induced transition along the flow.

Before proving \Cref{prop:3.2}, we recall the standard Benamou--Brenier duality result.

\begin{lem}\label{lem:3.3}
    Let $\varphi$ be the dual variable associated with the continuity equation constraint in the Benamou--Brenier problem \eqref{problem:BB}.
    The dual problem of  \eqref{problem:BB} can be written as
    \begin{align}
        \sup_{\varphi} \quad & \int_{\Omega}\varphi(x,1)\dd \nu(x)-\int_{\Omega}\varphi(x,0)\dd \mu(x)\label{problem:dual BB} \tag{dual BB} \\
        \textup{s.t.} \quad & \partial_t\varphi(x,t)+\frac{1}{4}\|\nabla\varphi(x,t)\|^2\le 0. \label{constraint dual BB} \tag{dual BB-a}
    \end{align}
    Here the supremum is taken over all $C^1$ space-time functions $\varphi:\Omega\times[0,1]\to\R$ whose initial and terminal values satisfy $\varphi(\cdot,0)\in L^1(\mu)$ and $\varphi(\cdot,1)\in L^1(\nu)$. Moreover, if $(q,v)$ and $\varphi$ form an optimal primal-dual pair, then
    \begin{align}\label{eq:v=1/2grad varphi}
        v(x,t)=\frac{1}{2}\nabla\varphi(x,t)
    \end{align}
    for $q(\plh, t)$-almost every $x$ and almost every $t \in [0,1]$. 
\end{lem}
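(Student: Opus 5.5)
The statement to prove is \Cref{lem:3.3}: the dual of \eqref{problem:BB} for $p=2$, together with the optimality relation \eqref{eq:v=1/2grad varphi}. I would derive it by the standard momentum-substitution Lagrangian argument of Benamou--Brenier. Set $m=qv$, so that the action becomes $\int_0^1\int_\Omega \|m\|^2/q$. This is a jointly convex, one-homogeneous functional of $(q,m)$, with the convention that it equals $0$ when $(q,m)=(0,0)$ and $+\infty$ when $q=0$ but $m\neq 0$. The continuity equation is then linear: $\partial_t q+\nabla\cdot m=0$, with $q(\cdot,0)=\mu$ and $q(\cdot,1)=\nu$.

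\textbf{Step 1: the Lagrangian and the dual constraint.} I would test the continuity equation, in weak form, against a $C^1$ multiplier $\varphi$. Integrating by parts in time and space (no-flux boundary on the convex domain $\Omega$) gives the Lagrangian
\[
L(q,m,\varphi)=\int_0^1\!\!\int_\Omega \Big(\frac{\|m\|^2}{q}-\partial_t\varphi\, q-\nabla\varphi\cdot m\Big)+\int_\Omega\varphi(\cdot,1)\dd\nu-\int_\Omega\varphi(\cdot,0)\dd\mu .
\]
Next I minimize over $(q,m)$ pointwise. For fixed $q>0$, the infimum of $\|m\|^2/q-\nabla\varphi\cdot m$ is attained at $m=\tfrac12 q\nabla\varphi$ and equals $-\tfrac14 q\|\nabla\varphi\|^2$. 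The remaining integrand, $-q\big(\partial_t\varphi+\tfrac14\|\nabla\varphi\|^2\big)$, is linear in $q\ge0$. Its infimum is therefore $0$ exactly when \eqref{constraint dual BB} holds, and $-\infty$ otherwise. This produces \eqref{problem:dual BB}. Weak duality (dual value $\le$ primal value) follows directly from this computation for any admissible $(q,v)$ and any admissible $\varphi$.

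\textbf{Step 2: no duality gap.} Strong duality is the step I expect to be the main obstacle. The cleanest route is Fenchel--Rockafellar duality, as in Benamou--Brenier and Villani. One writes the dual as the minimization of $F(\partial_t\varphi,\nabla\varphi)+G(\varphi)$, where $F$ is the indicator of the convex set $\{(a,b):a+\tfrac14\|b\|^2\le0\}$. One then checks that the Legendre transform of this indicator is precisely the action $\|m\|^2/q$, and verifies the continuity qualification condition: $F$ is finite and continuous at $(-1,0)$, which is attained by $\varphi=-t$.

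There is a technical difficulty in doing this on an unbounded $\Omega$ with test functions only in $L^1$. I would handle it in one of two ways. The first option is to assume compact support, or to work with the classical formulation and invoke \Cref{prop:3.1}, whose value is $\W_2^2$. The second option is to identify the dual value with $\W_2^2$ through Kantorovich duality: Hopf--Lax solutions $\varphi(x,t)=\inf_y\{\varphi(y,0)+\|x-y\|^2/t\}$ of the Hamilton--Jacobi equation, built from Kantorovich potentials, approach the sup. A smoothing argument is then needed to stay within the $C^1$ class.

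\textbf{Step 3: the optimality relation.} With no gap, take an optimal primal pair $(q,v)$ and an optimal $\varphi$. Zero duality gap forces the pointwise nonnegative integrand to vanish $q\,\dd t$-almost everywhere. By the Lagrangian computation, $\|v\|^2 q-\partial_t\varphi\, q-\nabla\varphi\cdot vq$ is the sum of two nonnegative pieces:
\[
q\,\Big\|v-\tfrac12\nabla\varphi\Big\|^2 \quad\text{and}\quad -q\Big(\partial_t\varphi+\tfrac14\|\nabla\varphi\|^2\Big)\ \ge 0 .
\]
Both must therefore vanish. This gives $v=\tfrac12\nabla\varphi$ for $q(\cdot,t)$-almost every $x$ and almost every $t$, together with equality in the Hamilton--Jacobi inequality on the support of $q$.
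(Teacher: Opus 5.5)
Your proposal is correct and matches the approach behind the paper's lemma. The paper gives no derivation of its own and only cites the standard Benamou--Brenier duality from Santambrogio (Section 6.1), which proceeds as you do: momentum substitution, pointwise minimization of the Lagrangian giving $\partial_t\varphi+\frac14\|\nabla\varphi\|^2\le 0$, Fenchel--Rockafellar with qualification point $\varphi=-t$, and the completed-square slackness $q\|v-\frac12\nabla\varphi\|^2=0$. You are right to flag that strong duality, and the weak-form identity used in Step 3, need compactness or growth assumptions on $\Omega$ when only $C^1$ multipliers with $L^1$ endpoint values are allowed; the paper leaves this implicit in the citation.
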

\begin{proof}
    This is the standard duality and optimality condition for the Benamou--Brenier formulation; see, e.g., \cite[Section 6.1]{Santambrogio2015}.
\end{proof}

We now prove \Cref{prop:3.2}.

\begin{proof}[Proof of \Cref{prop:3.2}]
    Using the dual variables $\set{\lambda^s}_{s=0}^T$ specified in the statement, the Lagrangian of the problem~\eqref{problem:dot} is
    \begin{align*}
        \mathcal{L}(\{\pi^s\},\{\lambda^s\}) = & \sum_{s=0}^{T-1}\int_{\Omega\times\Omega} \left[\frac{\|x-y\|^2}{\Delta t_s} -\lambda^{s+1}(y) + \lambda^s(x)\right]\dd \pi^s(x,y) \\
        & - \int_{\Omega}\lambda^0(x)\dd \mu(x)+\int_{\Omega}\lambda^T(x)\dd \nu(x). 
    \end{align*}
    Taking the infimum over nonnegative measures $\pi^s$ gives a finite value if and only if
    \begin{align*}
        \frac{\|x-y\|^2}{\Delta t_s} - \lambda^{s+1}(y) + \lambda^s(x) \ge 0,\quad s=0,\ldots,T-1. 
    \end{align*}
    Under this condition, the infimum of $\mathcal{L}$ over $\{\pi^s\}$ equals
    \begin{align*}
        \int_{\Omega}\lambda^T(x)\dd \nu(x)-\int_{\Omega}\lambda^0(x)\dd \mu(x).
    \end{align*}
    Therefore the dual of \eqref{problem:dot} is precisely \eqref{problem:dual dot}.

    It remains to prove the uniqueness and velocity recovery statements. Let $\varphi(x,t)$ be an optimal solution of the dual problem of \eqref{problem:BB} in \Cref{lem:3.3}, and define
    \begin{align*}
        \lambda^s(x) \coloneqq \varphi(x, t_s), \quad s = 0, \ldots, T.
    \end{align*}
    Since a common additive shift leaves both the constraints and the objective of \eqref{problem:dual dot} unchanged, we shift the entire family $\set{\lambda^s}_{s=0}^T$, if necessary, so that
    \begin{align*}
        \int_\Omega \lambda^0(x) \dd \mu (x) = 0.
    \end{align*}
    We first show that these grid potentials solve \eqref{problem:dual dot}. Fix $s \in \set{0, \ldots, T-1}$, and let $\gamma:[t_s,t_{s+1}]\to\Omega$ be any absolutely continuous curve satisfying $\gamma(t_s)=x$ and $\gamma(t_{s+1})=y$. By the constraint \eqref{constraint dual BB},
    \begin{align*}
        \frac{\dd}{\dd t}\varphi(\gamma(t),t)=\partial_t\varphi+\nabla\varphi\cdot\dot{\gamma}\le -\frac{1}{4}\|\nabla\varphi\|^2+\nabla\varphi\cdot\dot{\gamma}\le \|\dot{\gamma}(t)\|^2.
    \end{align*}
    Integrating over $[t_s,t_{s+1}]$ gives
    \begin{align*}
        \varphi(y,t_{s+1})-\varphi(x,t_s)\le \int_{t_s}^{t_{s+1}}\|\dot{\gamma}(t)\|^2\dd t.
    \end{align*}
    Minimizing over all such curves, and using the convexity of $\Omega$, the minimizing curve is the straight line from $x$ to $y$. Hence
    \begin{align*}
        \varphi(y,t_{s+1})-\varphi(x,t_s)\le \frac{\|x-y\|^2}{\Delta t_s}.
    \end{align*}
    Therefore,
    \begin{align*}
        \frac{\|x-y\|^2}{\Delta t_s}-\lambda^{s+1}(y)+\lambda^s(x) \ge 0,
    \end{align*}
    so the grid potentials $\set{\lambda^s}_{s=0}^T$ are feasible for \eqref{problem:dual dot}. Their objective value is
    \begin{align*}
        \int_{\Omega}\lambda^{T}(x) \dd \nu(x)-\int_{\Omega}\lambda^0(x)\dd \mu(x) = \int_{\Omega}\varphi(x,1)\dd \nu(x)-\int_{\Omega}\varphi(x,0)\dd \mu(x),
    \end{align*}
    which is the optimal value of \eqref{problem:dual BB}. By \Cref{lem:3.3} and \Cref{prop:3.1}, this value equals the optimal value of \eqref{problem:dot}. Hence $\set{\lambda^s}_{s=0}^T$ is an optimal solution of \eqref{problem:dual dot}.

    We next establish the uniqueness of the optimal solution of \eqref{problem:dual dot}. Let $\{\widetilde{\lambda}^s\}_{s=0}^T$ be any other normalized optimal solution. For each $s$, dual feasibility gives
    \begin{align*}
        \int_{\Omega}\widetilde{\lambda}^{s+1}\dd \rho^{s+1}-\int_{\Omega}\widetilde{\lambda}^s\dd \rho^s\le \frac{1}{\Delta t_s} \W_2^2(\rho^s,\rho^{s+1}).
    \end{align*}
    Summing over $s$, the left-hand side telescopes to the optimal value of \eqref{problem:dual dot}, while the right-hand side equals $\W_2^2(\mu,\nu)$ by \Cref{prop:3.1}. Hence equality holds for every $s$, and both $(-\lambda^s,\lambda^{s+1})$ and $(-\widetilde{\lambda}^s,\widetilde{\lambda}^{s+1})$ are optimal dual pairs for the quadratic transport problem between $\rho^s$ and $\rho^{s+1}$. By the standard uniqueness result for Kantorovich potentials \parencite[Corollary 2.7]{delbarrio2021clt}, applied to each one-step problem and its reverse, together with equality of the corresponding one-step dual values, there is a constant $C_s$ such that
    \begin{align*}
        \lambda^s=\widetilde{\lambda}^s+C_s,\quad \lambda^{s+1}=\widetilde{\lambda}^{s+1}+C_s,
    \end{align*}
    on the corresponding marginal supports. Since $\lambda^{s+1}$ is shared by two adjacent one-step problems, $C_s=C_{s+1}$. Thus all constants are equal to a single common constant $C$. The normalization at $s=0$ gives $C=0$, proving uniqueness of the normalized dual family. 

    Consequently, 
    \begin{align*}
        \nabla \widetilde{\lambda}^s = \nabla \lambda^s = \nabla\varphi(\plh, t_s)
    \end{align*}
    for $\rho^s$-almost every $x$. By \Cref{lem:3.3}, the optimal Benamou--Brenier velocity field satisfies $v(x,t) = \frac{1}{2}\nabla \varphi(x,t)$ for almost every $t$. Since changing the velocity at the finitely many grid times does not affect either the continuity equation or the Benamou--Brenier action, we choose its values at these times so that
    \begin{align*}
        v(x,t_s) = \frac{1}{2}\nabla \varphi(x,t_s) = \frac{1}{2} \nabla \lambda^s(x), \quad s = 0, \ldots, T.
    \end{align*}
    This proves \eqref{eq:velocity recovery} and completes the proof.
\end{proof}

\subsection{Approximating the velocity in \eqref{problem:BB} from moment relaxation for \eqref{problem:dot}}\label{subsec:3.3}
\Cref{subsec:3.2} shows how to recover the Benamou–Brenier velocity field from the dual potentials of problem \eqref{problem:dot} in the quadratic case $p=2$. In this subsection, we explain how to obtain an approximation of the velocity field from a dual solution of the cluster moment relaxation. We focus on the simplified relaxation \eqref{problem:Mom2}, where the local positivity constraints are dropped while the global positivity constraints are retained.

The relaxed dual problem may have more than one optimizer. Throughout this subsection, the phrase \emph{the dual solution} refers to any fixed optimal solution of the relaxed dual problem. The construction below is applied to this optimizer, and its common additive constant has no effect on the recovered velocity field.

We derive the dual of the moment relaxation \eqref{problem:Mom2} for problem \eqref{problem:dot} directly from a measure-level formulation. Let
\begin{align*}
    \mathcal{Z}=\Omega\times\Omega,\quad z=(x,y)\in \mathcal{Z},\quad s=0,\ldots,T-1.
\end{align*}
Instead of optimizing over moment matrices, the relaxation can be viewed as optimizing over signed measures $\eta^s\in\mathcal{M}(\mathcal{Z})$. Let $\Phi(z)=(\Phi_A(z_A))_{A\in\mathcal{C}}$ denote the vector of retained cluster basis functions. Define the associated finite-dimensional SOS cone
\begin{align*}
    \Sigma^s:=\{\langle S^s,\Phi(z)\Phi(z)^*\rangle:S^s\succeq 0\}.
\end{align*}
Equivalently, $\Sigma^s$ consists of functions of the form $\sigma(z)=\sum_{\ell}|q_{\ell}(z)|^2$, where $q_{\ell}\in\operatorname{span}\{\Phi\}$. Let 
\begin{align*}
    \mathcal{W}_\G \coloneqq  \operatorname{span}\set{ {\rm R}_x \paren{\set{\Phi_A\Phi_A^*: A \in \C} \cup \set{\Phi_A \Phi_B^*: AB \in \E}}},
\end{align*}
where ${\rm R}_x$ selects the entries that depend only on the $x$-copy of $z = (x,y)$. We can write the measure-level relaxation as
\begin{align}
    \inf_{\{\eta^s\}_{s=0}^{T-1}}\quad & \sum_{s=0}^{T-1}\langle c^s,\eta^s\rangle 
    \label{problem:measure mom2} \\
    \st & \langle f \circ {\rm P}_{\rm L},\eta^0\rangle = \langle f,\mu\rangle, \quad \forall f\in\mathcal{W}_\G, \tag{3.7-a}\\
    & \langle f \circ {\rm P}_{\rm R},\eta^{T-1}\rangle=\langle f,\nu\rangle,\quad \forall f\in\mathcal{W}_\G, \tag{3.7-b}\\
    & \langle f\circ {\rm P}_{\rm R},\eta^s\rangle = \langle f \circ {\rm P}_{\rm L},\eta^{s+1}\rangle, \quad \forall f \in\mathcal{W}_\G,\quad s=0,\ldots,T-2, \tag{3.7-c}\\
    & \langle\sigma,\eta^s\rangle\ge 0,\quad \forall\sigma\in\Sigma^s,\quad s=0,\ldots,T-1. \label{constraint-positivity measure mom2 dot}\tag{3.7-d}
\end{align}
This formulation is equivalent to the moment-matrix relaxation \eqref{problem:Mom2} applied to problem \eqref{problem:dot}. Indeed, given signed measures $\eta^s$, define
\[
    M^s=\int_{\mathcal{Z}}\Phi(z)\Phi(z)^*\dd \eta^s(z).
\]
Then, for any $\sigma(z)=\langle S^s,\Phi(z)\Phi(z)^*\rangle\in\Sigma^s$, we have
\begin{align} \label{eq:measure-matrix}
    \langle\sigma,\eta^s\rangle=\left\langle S^s,\int_{\mathcal{Z}}\Phi(z)\Phi(z)^*\dd \eta^s(z)\right\rangle=\langle S^s,M^s\rangle.
\end{align}
Therefore $\angles{\sigma,\eta^s} \ge 0$ for any $\sigma\in\Sigma^s$ is equivalent to $M^s\succeq 0$. Conversely, let $M^s$ be a feasible moment matrix for relaxation \eqref{problem:Mom2} for problem \eqref{problem:dot}. The consistency constraints imply that $M^s$ defines a well-defined linear functional $L^s$ on the finite-dimensional space $\mathcal{V}^s=\operatorname{span}\{\text{entries of } \Phi(z)\Phi(z)^*\}$ after identifying repeated product functions. Since $\mathcal{V}^s$ is finite dimensional, any such linear functional can be represented on $\mathcal{V}^s$ by a signed atomic measure: there exist points $z_\ell \in \mathcal{Z}$ and weights $w_\ell \in \R$ such that $\eta^s=\sum_{\ell}w_{\ell}\delta_{z_{\ell}}$ and
\begin{align*}
    \int f\dd \eta^s=L^s(f),\quad \forall f\in\mathcal{V}^s, \quad s = 0, \ldots, T-1.
\end{align*}
In particular, 
\[
    M^s=\int \Phi(z)\Phi(z)^*\dd \eta^s(z).
\]
Thus the measure formulation \eqref{problem:measure mom2} and the matrix formulation \eqref{problem:Mom2} applied to problem \eqref{problem:dot} have the same feasible retained moments and the same optimal value.

We now derive the dual of \eqref{problem:measure mom2}. Let
\[
    \widehat{\lambda}^0, \widehat{\lambda}^T \in \mathcal{W}_\G; \qquad \widehat{\lambda}^s \in \mathcal{W}_\G, \quad s = 1, \ldots, T-1
\]
be the dual potentials associated with the endpoint and mass-conservation constraints, respectively, and let
\[
    \tau^s \in \Sigma^s, \quad s = 0, \ldots, T-1
\]
be the multipliers associated with the relaxed positivity constraints \eqref{constraint-positivity measure mom2 dot}. The Lagrangian of the problem~\eqref{problem:measure mom2} is
\begin{align*}
    \mathcal{L} = &\sum_{s=0}^{T-1}\langle c^s,\eta^s\rangle + \langle\widehat{\lambda}^0\circ {\rm P}_{\rm L},\eta^0\rangle-\langle\widehat{\lambda}^0,\mu\rangle +\langle\widehat{\lambda}^T , \nu \rangle-\langle\widehat{\lambda}^T\circ {\rm P}_{\rm R} , \eta^{T-1}\rangle \\
     & +\sum_{s=0}^{T-2}\left[\langle\widehat{\lambda}^{s+1}\circ {\rm P}_{\rm L},\eta^{s+1}\rangle-\langle\widehat{\lambda}^{s+1}\circ {\rm P}_{\rm R},\eta^s\rangle\right] -\sum_{s=0}^{T-1}\langle\tau^s,\eta^s\rangle \\
    = & \langle\widehat{\lambda}^T,\nu\rangle-\langle\widehat{\lambda}^0,\mu\rangle+\sum_{s=0}^{T-1}\left\langle c^s+\widehat{\lambda}^s\circ {\rm P}_{\rm L}-\widehat{\lambda}^{s+1}\circ {\rm P}_{\rm R}-\tau^s,\eta^s\right\rangle. 
\end{align*}
Since $\eta^s$ is a signed measure, the infimum of $\mathcal{L}$ over $\eta^s$ is finite only if
\begin{align*}
    c^s(x,y)+\widehat{\lambda}^s(x)-\widehat{\lambda}^{s+1}(y) = \tau^s(x,y), \quad s=0,\ldots,T-1.
\end{align*}
Substituting the quadratic Benamou--Brenier cost $c^s(x,y) = \|x-y\|^2 / \Delta t_s$, the dual problem is
\begin{align}
    \sup_{\widehat{\lambda}^s\in \mathcal{W_\G}, \, \tau^s \in\Sigma^s} \quad & \int_{\Omega}\widehat{\lambda}^T(x)\dd \nu(x)-\int_{\Omega}\widehat{\lambda}^0(x)\dd \mu(x) \tag{dual 3.7}\\
    \st & \frac{\|x-y\|^2}{\Delta t_s}-\widehat{\lambda}^{s+1}(y)+\widehat{\lambda}^s(x)=\tau^s(x,y),\quad s=0,\ldots,T-1.  \label{conssos} \tag{dual 3.7-a}
\end{align}
Using the Gram representation of $\Sigma^s$, the constraint \eqref{conssos} can be written as
\begin{align}\label{eq:SOS identity}
    \frac{\|x-y\|^2}{\Delta t_s} - \widehat{\lambda}^{s+1}(y)+\widehat{\lambda}^s(x)=\langle S^s,\Phi(z)\Phi(z)^*\rangle,\qquad S^s\succeq 0,\qquad s=0,\ldots,T-1.
\end{align}
This shows that the dual of the moment relaxation \eqref{problem:measure mom2} for problem \eqref{problem:dot} is an SOS relaxation of the exact dual problem \eqref{problem:dual dot}. Indeed, the pointwise nonnegativity constraint in the exact dual problem \eqref{constraint-dual dot} is replaced by the stronger SOS certificate in the retained cone $\Sigma^s$. Consequently, the dual potentials $\widehat{\lambda}^s$ should be interpreted as approximations of the exact dual potentials $\lambda^s$ in the prescribed finite-dimensional basis $\mathcal{W_\G}$.

We now approximate the velocity field. By \Cref{prop:3.2}, in the quadratic Benamou--Brenier setting, the optimal velocity field is given at the grid times by $v(x,t_s)=\frac{1}{2}\nabla\lambda^s(x)$. Therefore, from the moment dual potentials, we define the approximate velocity field associated with the fixed dual solution by
\begin{align}\label{eq:approximate v}
    \widehat{v}(x,t_s)=\frac{1}{2}\nabla\widehat{\lambda}^s(x), \quad s=0,\ldots,T-1.
\end{align}
The SOS identity \eqref{eq:SOS identity} gives a concrete formula for $\widehat{\lambda}^s$. Equating the terms in \eqref{eq:SOS identity} that depend only on $x$, we obtain, up to an additive constant,
\begin{align}\label{eq:approximate lambda}
    \widehat{\lambda}^s(x)=-\frac{\|x\|^2}{\Delta t_s}+R_x(\langle S^s,\Phi(z)\Phi(z)^*\rangle). 
\end{align}
Combining \eqref{eq:approximate v} and \eqref{eq:approximate lambda} then yields
\begin{align}
    \widehat{v}(x,t_s)=-\frac{x}{\Delta t_s} + \frac{1}{2}\nabla R_x(\langle S^s,\Phi(z)\Phi(z)^*\rangle). 
\end{align} 

In summary, the primal solution of the moment relaxation recovers low-order statistics of the intermediate distributions, while its dual potentials approximate the Benamou--Brenier potentials through an SOS relaxation. Differentiating these potentials gives the approximate Benamou--Brenier velocity field, which can then be used to generate the associated approximate flow and hence the corresponding Markov transition kernel in problem \eqref{problem:1.1}.

\section{Recovering general dynamics from problem \eqref{problem:coupling}}\label{sec:Ising}
The previous section shows that, when $\Omega$ is convex, the one-step cost is given by the kinetic cost \eqref{eq:dot-cost 3.1}, and no Markov kernel constraints are imposed (i.e., $\cD^s=\mathcal{P}(\Omega\times\Omega)$ in \eqref{constraint-control coupling}), problem \eqref{problem:coupling} exactly recovers the Benamou--Brenier geodesic. In more general settings, such as finite or nonconvex state spaces, the optimal solution of \eqref{problem:coupling} need not correspond to a classical Benamou--Brenier geodesic or to prescribed dynamics. Moreover, the convex relaxations in \Cref{sec:conv relaxation} retain only local statistics of the optimal sequential couplings, rather than the full one-step transition kernels. In this section, we introduce a parametric family of transition kernels $\{K_{\theta_s}^s\}_{\theta_s}$ and fit the parameters $\theta_s$ so that the one-step evolution induced by $K_{\theta_s}^s$ matches the recovered local statistics at the next time step.

We take the marginal relaxation as a representative case. After solving the relaxation, we extract the local marginals of the intermediate distributions
\begin{align*}
    \{\rho_A^s\}_{A\in\C}, \quad \{\rho_{AB}^s\}_{AB\in\E}, \quad s=0,\ldots,T.
\end{align*}
In the kernel-fitting step, we compare the next-time cluster-pair marginals predicted by the parameterized kernel with the recovered ones. The fitting loss may be evaluated on all retained cluster pairs, or only on those local marginals that are affected by the update and are informative for identifying the local transition law. Let $\E^s_{\fit}\subseteq \E$ denote this selected set of retained cluster pairs. For a cluster pair $AB \in \E_\fit^s$, define the next-time cluster-pair marginal predicted by the parameterized kernel as
\begin{align}\label{eq:next-time marginal}
    \widehat{\rho}_{AB,\theta_s}^{\, s+1}=({\rm P}_{AB})_\sharp \, \paren{(K_{\theta_s}^s)^*\rho^s},
\end{align}
where ${\rm P}_{AB}$ denotes the projection onto the coordinates in $AB$. Formula \eqref{eq:next-time marginal} is written at the full-law level. It can be evaluated locally whenever the $AB$-marginal of the one-step kernel depends on the current configuration only through a coordinate
neighborhood $N(AB)$. In a finite state space, this gives
\begin{align}
    \widehat{\rho}_{AB,\theta_s}^{\, s+1}(y_{AB})=\sum_{x_{N(AB)}} \rho_{N(AB)}^{\, s} \paren{x_{N(AB)}} K_{\theta_s,AB}^s \paren{y_{AB}\mid x_{N(AB)}},
\end{align}
where $K^s_{\theta_s,AB}$ denotes the output law on cluster $AB$ induced by $K^s_{\theta_s}$, and $\rho^s_{N(AB)}$ is the current marginal on the input coordinates needed to evaluate this local transition law. It remains to obtain the local input marginal $\rho^s_{N(AB)}$ from the relaxed solution. If $N(AB)$ is contained in a cluster or a retained cluster pair, then $\rho^s_{N(AB)}$ is obtained by projection from the corresponding relaxed marginal. Otherwise, we reconstruct it from the available one-cluster and cluster-pair marginals. A standard choice is the Bethe-type reconstruction. For example, on a path graph with single-coordinate clusters, suppose that an update at an interior site $i$ requires the current marginal on $\{i-1,i,i+1\}$, but only the pair marginals $\rho^s_{i-1,i}$ and $\rho^s_{i,i+1}$ are retained. Then the
Bethe-type reconstruction is
\begin{align*}
    \rho_{i-1,i,i+1}^{s}(a,b,c)=\frac{\rho_{i-1,i}^s(a,b)\rho_{i,i+1}^s(b,c)}{\rho_i^s(b)}, \quad a\in \Omega_{i-1}, \quad b \in \Omega_i, \quad c \in \Omega_{i+1}, \quad \rho_i^s(b) > 0.
\end{align*}

We then fit $\theta_s$ at each step by minimizing the squared loss $L^s(\theta_s)$:
\begin{align}
    L^s(\theta_s) \coloneqq \sum_{AB\in\E^s_{\fit}}\left\|\widehat{\rho}_{AB,\theta_s}^{\,s+1}-\rho_{AB}^{s+1}\right\|_2^2+\lambda R(\theta_s),
\end{align}
where $R$ is a regularization term and $\lambda\ge 0$ is the penalty strength. Thus the fitting step projects the recovered local statistics, in a least-squares sense, onto the chosen parametric Markov-kernel family.

In what follows, we illustrate this procedure for a constrained Markov process between two Ising models, where only one spin is allowed to change at each step. \Cref{subsec:4.1} formulates the Ising laws and imposes the single-spin update constraint in the sequential-coupling problem. \Cref{subsec:4.2} then uses the local marginals of the intermediate distributions obtained from the marginal relaxation to fit a time-inhomogeneous Glauber dynamics, thereby converting the recovered statistics into an explicit Markov kernel.

\subsection{Ising models and the single-spin update process}\label{subsec:4.1}

We consider the finite-state spin configuration space $\Omega = \set{-1,1}^d$, where each coordinate $x_i \in \set{-1,1}$ represents the spin at site $i$. An Ising model is specified by an interaction graph
\begin{align}
    \G_{\ising}=([d],\E_{\ising}), \quad \E_{\ising} \subset [d]_2 \coloneqq \set{\{i,j\} : 1 \le i < j \le d},
\end{align}
pairwise interaction strengths $\{J_{ij}\}_{ij\in \E_{\ising}}$, external fields $h=(h_1,\ldots,h_d)$, and inverse temperature $\beta>0$. For brevity, we write $ij$ for the unordered edge $\set{i,j}$. The corresponding probability mass function is
\begin{align}\label{eq:Ising model}
    p_{\theta}(x)=\frac{1}{Z_{\theta}}\exp\left[\beta\left(\sum_{ij\in \E_{\ising}}J_{ij}x_i x_j+\sum_{i=1}^d h_i x_i\right)\right], \qquad x\in\{-1,1\}^d,
\end{align}
where $\theta = (\G_{\ising},J,h,\beta)$ and $Z_{\theta}$ is the normalizing constant. The source and target laws are taken to be $\mu=p_{\theta_{\mu}}$ and $\nu=p_{\theta_{\nu}}$. The initial and terminal laws may have different interaction strengths, external fields, inverse temperatures, or interaction graphs. The graph $\G_{\ising}$ records the physical interaction structure of the Ising law and should be distinguished from the cluster graph $\G = (\C, \E)$ used in the relaxation scheme.

We impose a single-spin update constraint on the Markov process. Let $0 = t_0 < t_1< \cdots < t_T = 1$ be a time grid and let
\begin{align*}
    i_s\in[d], \qquad s=0,\ldots,T-1,
\end{align*}
be the active spin at time step $s$, during which only the spin $i_s$ is allowed to change while all other spins remain fixed. Thus the admissible Markov kernels satisfy
\begin{align}\label{eq:kernel constraint Ising}
    K^s(y\mid x)=0 \quad \text{whenever } (x,y)\notin \Gamma^s, \quad \Gamma^s \coloneqq \{(x,y)\in\Omega\times\Omega:y_j=x_j \text{ for all } j\ne i_s\}.
\end{align}
In the sequential-coupling formulation, this becomes the linear support constraint
\begin{align}
    \pi^s(x,y)=0 \quad \text{whenever } (x,y)\notin \Gamma^s.
\end{align}
The single-spin update process between the initial and terminal laws is therefore the following instance of the sequential-coupling problem \eqref{problem:coupling}:
\begin{align}
    \inf_{\{\pi^s\}_{s=0}^{T-1}}\; & \sum_{s=0}^{T-1}\sum_{x,y\in\Omega}c^s(x,y)\pi^s(x,y) \label{problem:Ising} \\
    \st  & \PL \pi^0=\mu,\quad \PR \pi^{T-1}=\nu, \label{constraint-endpoint Ising}\tag{4.8-a}\\
    & \PR \pi^s = \PL \pi^{s+1},\qquad s=0,\ldots,T-2, \label{constraint-time consis Ising}\tag{4.8-b}\\
    & \pi^s(x,y)=0 \quad \text{if } (x,y)\notin \Gamma^s, \label{constraint-control Ising}\tag{4.8-c}\\
    & \pi^s(x,y)\ge 0. \label{constraint-positivity Ising} \tag{4.8-d}
\end{align}
A simple one-step cost is the quadratic spin-flip cost $c^s(x,y)=\|x-y\|^2$. Under the single-spin update constraint, this cost is zero if the active spin does not change and equals $4$ if the active spin flips. Other local costs can also be used.

We solve \eqref{problem:Ising} using the marginal relaxation \eqref{problem:Mar1} from \Cref{subsec:marginal relaxation}. Since $\Omega$ is finite and the single-spin support constraints are affine, $\mathrm{Mar}^1$ becomes a linear program (LP). Let $\C=\{A_1,\ldots,A_K\}$ be a partition of $[d]$, and let $\G=(\C, \E)$ be the cluster graph used in the relaxation. Solving the resulting LP gives local marginals of the intermediate couplings. We then extract the local marginals of the intermediate distributions:
\begin{align}\label{eq:cluster marginals Ising}
    \rho_A^s, \quad \rho_{AB}^s, \quad A \in \C, \quad AB \in \E, \quad s = 0, \ldots, T.
\end{align}

\subsection{Fitting a Glauber dynamics}\label{subsec:4.2}

The marginal relaxation recovers local cluster marginals of the intermediate distributions as in \eqref{eq:cluster marginals Ising}, but it does not directly specify a Markov kernel. We now fit such a kernel from the recovered marginals using the method introduced at the beginning of \Cref{sec:Ising}. Because the admissible process has a single-spin update structure, we naturally choose a Glauber kernel. Specifically, over each time interval $[t_s, t_{s+1}]$, the Markov transition from $\rho^s$ to $\rho^{s+1}$ is parameterized as a single Glauber update with parameters $\theta_s$.

Glauber dynamics defines a classical Markov chain for sampling from Ising models. In our parameterization, the inverse temperature is absorbed into the effective external fields and local interactions, so a Glauber kernel is specified by a graph $\G_{\glb}$, external fields $h$, and interaction strengths $J$. The graph $\G_{\glb} = ([d], \E_{\glb})$ denotes the dependency graph for the fitted Glauber dynamics and is fixed across all time steps. This graph need not coincide with either the initial or the terminal Ising interaction graph, and it is distinct from the cluster graph $\G = (\C, \E)$ used in the marginal relaxation. Let $N_s$ be the neighborhood of the active spin $i_s$ at time $s$ induced by $\G_{\glb}$:
\begin{align*}
    N_s \coloneqq \set{j \in [d]: \{i_s,j\} \in \E_\glb}.
\end{align*}
At each update step of the Glauber dynamics, the active spin is resampled from its local conditional distribution while all other spins remain fixed:
\begin{align}\label{eq:Glauber conditional}
    q_{\theta_s}^s(x_{i_s} = \sigma\mid x_{N_s})=\frac{\exp(\sigma H^s(x_{N_s}))}{2\cosh(H^s(x_{N_s}))}, \qquad \sigma\in\{-1,1\}
\end{align}
where the local field associated with the neighboring configuration $x_{N_s}$ is defined as
\begin{align}\label{def:local field H^s}
    H^s(x_{N_s})=h_{i_s}^s+\sum_{j\in N_s}J_{i_sj}^s x_j.
\end{align}
Consequently, the one-step transition kernel is given by
\begin{align}\label{Markov kernel Ising}
    K_{\theta_s}^s(y\mid x)=\mathbf{1}_{\{y_j=x_j,\ j\ne i_s\}}q_{\theta_s}^s(y_{i_s}\mid x_{N_s}), \quad x, \;y \in \Omega.
\end{align}
The parameters to be fitted at step $s$ are $\theta_s = (h_{i_s}^s,\{J_{i_sj}^s:j\in N_s\})$. 

Next, we describe how this kernel predicts the cluster-pair marginals at the subsequent time step. Let $AB\in\E$ be a retained cluster pair. If the active spin $i_s \notin AB$, then all coordinates in $AB$ remain fixed during step $s$, yielding
\begin{align}
    \widehat{\rho}_{AB}^{\,s+1}(y_{AB})=\rho_{AB}^s(y_{AB}), \quad i_s\notin AB.
\end{align}
Such a marginal does not depend on $\theta_s$. If instead $i_s\in AB$, then the update probability of $Y_{i_s}$ depends on the neighboring spins in $N_s$. The predicted next-time cluster-pair marginal is computed from the current marginal on $(AB\setminus\{i_s\})\cup N_s$:
\begin{align}
    \widehat{\rho}_{AB, \theta_s}^{\,s+1}(y_{AB})=\sum_{x_{N_s\setminus AB}}\rho_{(AB\setminus\{i_s\})\cup N_s}^s (y_{AB\setminus\{i_s\}},x_{N_s\setminus AB}) \; q_{\theta_s}^s(y_{i_s}\mid x_{N_s}).
\end{align}
In the conditional law $q_{\theta_s}^s(y_{i_s}\mid x_{N_s})$, the vector $x_{N_s}$ is formed by taking $x_j = y_j$ for $j \in N_s \cap AB$, and summing over $x_j$ for $j \in N_s \setminus AB$. The marginal $\rho_{(AB\setminus\{i_s\})\cup N_s}^{\,s}$ is extracted from the recovered local marginals: if it is directly available from a retained cluster or cluster-pair marginal, we obtain it by projection; otherwise, it is reconstructed from the available local marginals by the Bethe-type local reconstruction. In particular, if the graph for Glauber dynamics $\G_\glb$ is chosen so that $N_s \subset AB$, then the expression simplifies to
\begin{align*}
    \widehat{\rho}_{AB, \theta_s}^{\,s+1}(y_{AB})=\rho_{AB\setminus\{i_s\}}^s(y_{AB\setminus\{i_s\}}) \; q_{\theta_s}^s(y_{i_s}\mid y_{N_s}).
\end{align*}
In summary, the predicted cluster-pair marginals are obtained by keeping the inactive coordinates fixed and resampling the active coordinate according to the fitted Glauber conditional law.

To estimate $\theta_s$, we match the predicted cluster-pair marginals with the recovered marginals at the subsequent time step. Let $\E^s_{\fit} = \{AB\in\E:i_s\in AB\}$ denote the set of retained cluster pairs used for fitting at step $s$. We solve
\begin{align}
    \min_{\theta_s}\sum_{AB\in\E^s_{\fit}}\left\|\widehat{\rho}_{AB, \theta_s}^{\,s+1}-\rho_{AB}^{s+1}\right\|_2^2+\lambda R(\theta_s),
\end{align}
where $\lambda\ge 0$ and $R(\theta_s)$ is a suitable regularization term.

The Glauber structure also admits a computationally efficient reduced form of this fitting procedure. Let $B_s=N_s\cup\{i_s\}$. If the recovered marginals are exactly generated by the single-site Glauber update, the log-odds ratio exhibits a linear structure:
\begin{align*}
    \frac{\rho_{B_s}^{s+1}(x_{N_s}=a,\;x_{i_s} = 1)}{\rho_{B_s}^{s+1}(x_{N_s}=a,\; x_{i_s} = -1)} = \exp(2H^s(a)),\quad a \in \set{-1,1}^{|N_s|}.
\end{align*}
Taking logarithms and using the definition of $H^s$ in \eqref{def:local field H^s} yields
\begin{align}\label{eq:Glauber fitting linear reg}
    \frac{1}{2}\log\frac{\rho_{B_s}^{s+1}(x_{N_s}=a, \; x_{i_s} = 1)}{\rho_{B_s}^{s+1}(x_{N_s}=a, \; x_{i_s} = -1)} = h_{i_s}^s+\sum_{j\in N_s}J_{i_sj}^s \, a_j.
\end{align}
Consequently, estimating $(h^s_{i_s},\{J^s_{i_sj}:j\in N_s\})$ reduces to a linear regression problem over configurations $a\in\{-1,1\}^{|N_s|}$. In practice, configurations with very small mass are either omitted or regularized by adding a small positive floor before taking the logarithm.

Repeating this procedure for $s = 0,\ldots, T-1$ yields a sequence of time-inhomogeneous Glauber kernels $\set{K_{\theta_s}^s}_{s=0}^{T-1}$ as defined in \eqref{Markov kernel Ising}. Given $X^0\sim\mu$, these kernels define a discrete-time Markov chain by
\begin{align*}
    X^{s+1}\sim K_{\theta_s}^s(\, \plh \mid X^s), \quad s=0,\ldots,T-1.
\end{align*}
The resulting process can be used to propagate samples in practice and approximately realize the recovered local intermediate statistics as marginals of an implementable Markov process.

\section{Numerical experiments}\label{sec:experiments}

In this section, we present numerical experiments illustrating the proposed relaxations and reconstruction methods. In \Cref{subsec:5.1}, we first test our method on Benamou--Brenier dynamics between Gaussian distributions, where the exact geodesic and velocity field are known. This example allows us to evaluate the accuracy of the recovered intermediate statistics and the extracted velocity field, as well as the effect of the cluster graph. We also compare our method with a particle-based back-propagation method to assess their empirical accuracy. In \Cref{subsec:5.2}, we consider a more challenging transport problem from a Gaussian distribution to a Ginzburg--Landau distribution. This experiment illustrates the reconstruction of intermediate low-dimensional marginals and the velocity-induced particle flow beyond the Gaussian setting. Finally, in \Cref{subsec:5.3}, we study a constrained Markov process between Ising distributions, where the marginal relaxation is combined with the fitting procedure in \Cref{sec:Ising} to fit Glauber dynamics.

\subsection{Benamou--Brenier dynamics between Gaussian distributions}\label{subsec:5.1}
We first consider the Benamou--Brenier dynamics between two Gaussian distributions $\mu = \N(m_0, \Sigma_0)$ and $\nu = \N(m_1, \Sigma_1)$. In this setting, both the Benamou--Brenier geodesic and the velocity field are available in closed form, so the recovered intermediate moments and velocity coefficients can be compared directly with the ground truth. 

The Benamou--Brenier geodesic is the Gaussian curve $\N(m_t, \Sigma_t)$, where
\begin{align}\label{eq:Gaussian mean cov}
    m_t=(1-t)m_0+tm_1, \quad \Sigma_t=B_t\Sigma_0B_t^\top,
\end{align}
with 
\begin{align*}
    B_t=(1-t)I+tQ, \quad Q = \Sigma_0^{-1/2}\left(\Sigma_0^{1/2}\Sigma_1\Sigma_0^{1/2}\right)^{1/2}\Sigma_0^{-1/2}.
\end{align*}
The optimal velocity field along the Benamou--Brenier geodesic is affine in $x$. More precisely, 
\begin{align}\label{eq:Gaussian velocity field}
    v(x,t)=A_tx+b_t, \quad A_t=(Q-I)B_t^{-1}, \quad b_t=m_1-m_0-A_tm_t.
\end{align}
These formulas are used to compute the reference mean, covariance, and optimal velocity field at the discrete time grid.

We sample the initial and terminal means $m_0, m_1 \in \R^d$ independently from the standard normal distribution $\N (0, I_d)$. The covariance matrices are generated through sparse precision matrices. More precisely, we first generate two symmetric banded matrices $P_0, P_1$ with bandwidth $3$. For $|i-j|\le 3$, $i\ne j$, the off-diagonal entries are sampled randomly, and the diagonal entries are chosen to make the precision matrices strictly diagonally dominant:
\begin{align*}
    (P_{\ell})_{ii}=\delta+\sum_{j\ne i}|(P_{\ell})_{ij}|,\qquad \ell=0,1,
\end{align*}
where $\delta>0$ is a diagonal shift. We then set $\Sigma_0 = P_0^{-1}$ and $\Sigma_1 = P_1^{-1}$. This construction gives Gaussian distributions with sparse precision matrices and decaying correlations.

We use single-coordinate clusters $\C = \set{\set{1}, \ldots, \set{d}}$. Let $G_{\mpath}$ be the one-dimensional path graph on $[d]$. For an integer radius $r \ge 1$, define the graph power $G^r_\mpath$ by connecting $i$ and $j$ whenever $\operatorname{dist}_{G_\mpath}(i,j)\le r$. In the experiments below, the cluster graph $\mathcal{G}$ is chosen as $G_\mpath^r$. We use the simplified moment relaxation \eqref{problem:Mom2} to solve the problem. For each coordinate $i$, we use the degree-one monomial basis $\set{\phi_{i,0}(x_i) = 1, \phi_{i,1}(x_i) = x_i}$. Thus each moment matrix $M^s = \pi^s\paren{\Phi \Phi^*}$ contains the first and second moments of two adjacent time layers, together with their cross-moments. The one-step cost $c^s(x,y) = \|x-y\|^2/\Delta t$ can be represented exactly by the selected basis. 

We set $d=50$, the precision-matrix diagonal shift $\delta=1$, the graph radius $r=16$, and $T=10$. We use an equispaced time grid on $[0,1]$ with $\Delta t = 1/T$. The resulting problem is a multi-block SDP with one dense semidefinite block $M^s$ per time interval, and is solved by MOSEK \cite{aps2019mosek} with a tolerance of $10^{-10}$.  Throughout this subsection, vector and matrix errors are reported as relative Euclidean and relative Frobenius errors, respectively. For sparse cluster graphs, we report a zero-filled covariance error. Specifically, for a recovered covariance matrix $\widehat{\Sigma}^s$ at time $t_s$, entries not retained by the cluster graph are set to zero: 
\begin{align*}
    \widehat{\Sigma}^s_{G_\mpath^r} = \widehat{\Sigma}^s\circ \mathbf{1}_{I\cup E(G_\mpath^r)}, \quad s = 0, \ldots, T
\end{align*}
Here, $I$ denotes the diagonal entries, $E(G^r_\mpath)$ denotes the retained off-diagonal entries, $\circ$ denotes entrywise multiplication, and $\mathbf{1}_{I\cup E(G^r_\mpath)}$ is the corresponding mask. The reported error is 
\begin{align*}
    \frac{\|\widehat{\Sigma}^s_{G_\mpath^r}-\Sigma_{t_s}\|_\mF}{\|\Sigma_{t_s}\|_\mF}, \quad s = 0, \ldots, T,
\end{align*}
where the ground truth $\Sigma_{t_s}$ is computed from \eqref{eq:Gaussian mean cov}. This metric captures both the recovery error on retained entries and the truncation error from omitted covariance entries. When $r=d-1$, the graph $G_\mpath^r$ is complete and the full covariance matrix is recovered from the relaxation.
\begin{figure}[H] 
    \centering  
    \captionsetup{labelformat=empty}
    \includegraphics[width=1\textwidth]{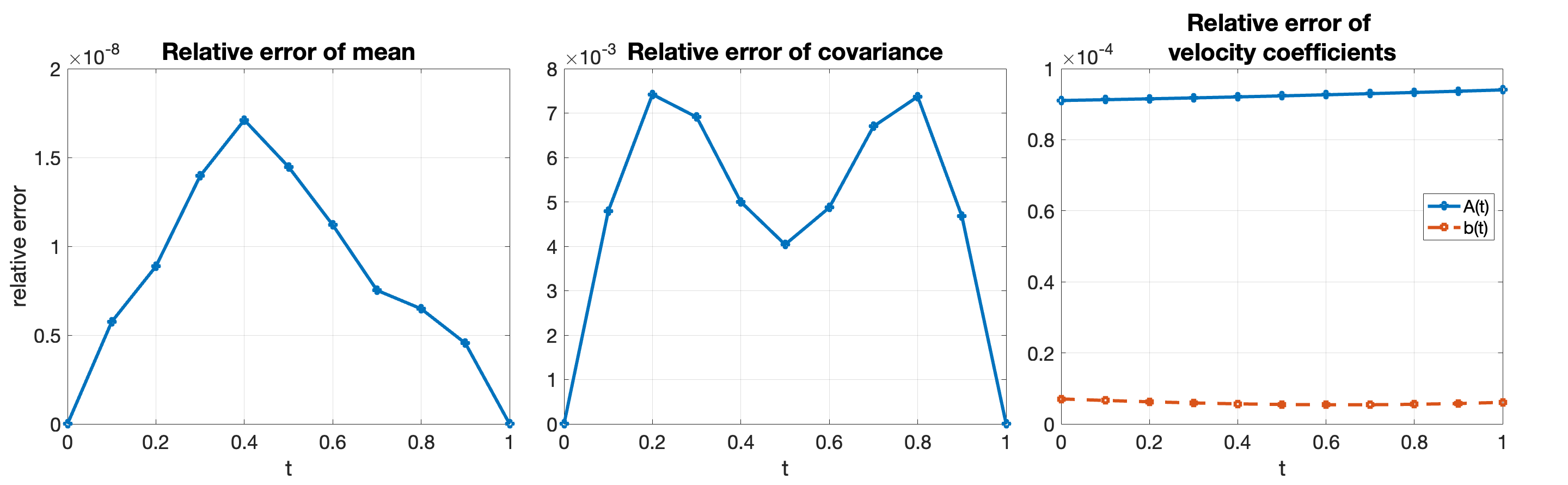} 
    \caption{Figure 1. Benamou–Brenier dynamics between Gaussian distributions in dimension $d=50$ with single-coordinate clusters and cluster graph $G_{\mpath}^{16}$. We use the moment relaxation \eqref{problem:Mom2}. The panels show relative errors of the recovered intermediate mean (left), covariance (middle), and velocity coefficients $A(t)$ and $b(t)$ (right) for the velocity field $v(x, t) = A(t)x + b(t)$.}
    \label{fig1} 
\end{figure}
\noindent\Cref{fig1} shows that the moment relaxation accurately recovers the intermediate statistics at the prescribed grid times and the affine velocity field. 

Next, we study the effect of the graph radius $r$. Keeping the source and target distributions unchanged from the previous setup, we vary $r$ from $6$ to $20$ in increments of $2$. For each $r$, we solve the corresponding relaxation associated with the cluster graph $G^r_\mpath$ and evaluate the covariance and velocity coefficient errors at $t=0.2$ and $t=0.5$.
\begin{figure}[H] 
    \centering  
    \captionsetup{labelformat=empty}
    \includegraphics[width=1\textwidth]{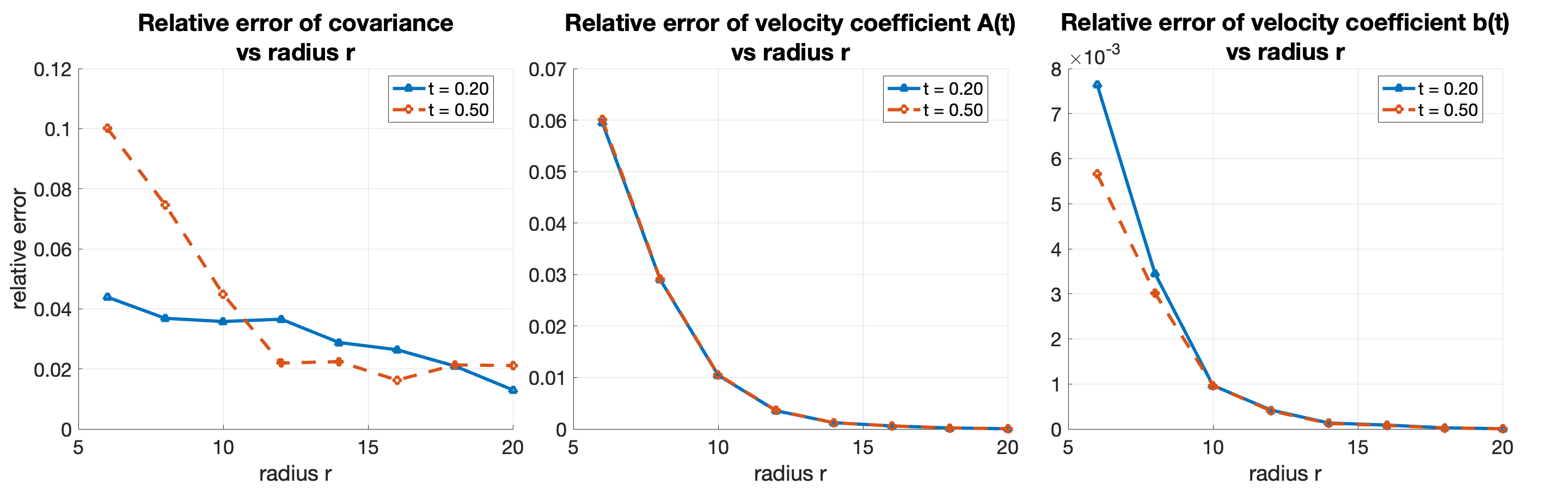} 
    \caption{Figure 2. Effect of the cluster graph radius on the recovered covariance and velocity field. The setting is the same as in Figure 1, except that the graph radius varies from $6$ to $20$. Errors are evaluated at $t \in \{0.2, 0.5\}$. The panels show relative errors for the recovered covariance (left), the affine velocity matrix $A(t)$ (middle), and the velocity offset $b(t)$ (right).}
    \label{fig2} 
\end{figure}
\noindent\Cref{fig2} shows that increasing the graph radius generally improves the recovered covariance and velocity coefficients. This behavior is consistent with the correlation decay of the Gaussian distribution: a larger graph radius retains more of the relevant covariance and cross-covariance structure.

Finally, we compare our approach with a particle-based back-propagation method. In this experiment, we choose dimension $d = 15$. The initial and terminal Gaussian distributions are generated as above with precision bandwidth $3$ and diagonal shift $\delta = 5$. The source and target means are sampled independently as
\begin{align*}
    m_0 \sim \N(0,I_d), \quad m_1 \sim \N(0.5 \, \mathbf{1}_d, I_d).
\end{align*}
To ensure a fair comparison, rather than using the closed-form Gaussian moments from previous experiments, we estimate the initial and terminal moments for the moment relaxation using $N_{\text{emp}} = 10{,}000$ independent samples drawn from each Gaussian distribution. The relaxation is then solved over the complete graph $G_\mpath^{d-1}$.

For the back-propagation method, we draw $N_{\text{train}}=10,000$ source particles $\set{X_i^0}_i \sim \mu$. We use the affine form of the Gaussian velocity field and parameterize the velocity field by
\begin{align*}
    v_\theta(x,t) = A_\theta(t)x + c_\theta(t).
\end{align*}
The time-dependent coefficients are expanded in the degree-two monomial basis $\set{1, t, t^2}$:
\begin{align*}
    A_\theta(t)=A_0+tA_1+t^2A_2, \quad c_\theta(t)=c_0+tc_1+t^2c_2,
\end{align*}
where $A_k \in \R^{d\times d}$ and $c_k \in \R^d$ are trainable parameters. The particles are pushed forward by explicit Euler steps
\begin{align*}
    X_i^{s+1} = X_i^s + \Delta t\, \bigl(A_\theta(t_s)X_i^s+c_\theta(t_s)\bigr).
\end{align*}
The empirical kinetic cost is
\[
    \mathcal{K}(\theta) = \sum_{s=0}^{T-1} \Delta t\, \frac{1}{N_{\text{train}}} \sum_{i=1}^{N_{\text{train}}} \left\| A_\theta(t_s)X_i^s+c_\theta(t_s)\right\|^2 .
\]
The terminal constraint at $t=1$ is enforced by matching Hermite moments of degree up to two. Let $\Psi(x)$ denote the Hermite basis vector centered and normalized with respect to the estimated target Gaussian distribution. The terminal residual is 
\[
    r(\theta) = \frac{1}{N_{\text{train}}} \sum_{i=1}^{N_{\text{train}}} \Psi(X_i^T) - \mathbb{E}_{Y\sim\nu}\Psi(Y),
\]
where the target expectation is estimated by Monte Carlo sampling from the target Gaussian distribution. While various penalties for the terminal constraint can be added to the kinetic cost, the moment-matching penalty presented here yields the best empirical performance. To further refine the solution, we employ an augmented Lagrangian method. The Lagrangian takes the form:
\[
    \mathcal{L}_\beta(\theta,\lambda) = \mathcal{K}(\theta) + \lambda^\top r(\theta) + \frac{\beta}{2} \|r(\theta)\|_2^2 .
\]
In the reported run, the penalty parameter is initialized as $\beta=5$ and kept fixed. The method performs $10$ outer augmented-Lagrangian iterations; in each outer iteration, the velocity parameters are optimized for $600$ Adam steps with learning rate $5\times 10^{-3}$. We use a gradient-clipping threshold of $100$, and a stepwise learning-rate scheduler with decay factor $0.6$ every $200$ inner iterations. After each inner solve, the dual coefficients are updated by $\lambda \leftarrow \lambda+\beta \, r(\theta)$. After training, we push forward a newly generated set of $N_{\text{test}} = 10,000$ particles using the learned velocity field, and compare the empirical covariance matrices along the path with the exact Gaussian geodesic covariance. We then compare these errors with the covariance error obtained directly from the moment relaxation. 
\begin{figure}[hbtp] 
    \centering  
    \captionsetup{labelformat=empty}
    \includegraphics[width=0.6\textwidth]{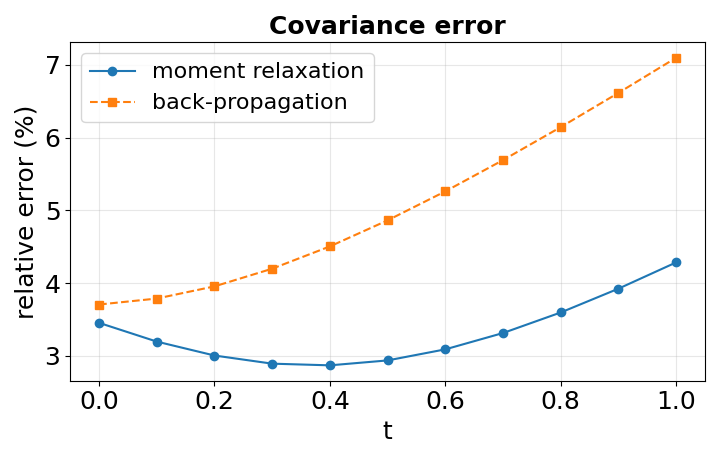} 
    \caption{Figure 3. Comparison between moment relaxation and back-propagation for the Gaussian benchmark in dimension $d=15$. The initial and terminal distributions are represented empirically using $10{,}000$ particles. The solid blue line indicates the relative error of the intermediate covariance using moment relaxation \eqref{problem:Mom2} with a complete cluster graph. For the dashed red line (back-propagation), an affine, time-dependent velocity field is trained on the $10{,}000$ particles via an augmented Lagrangian method. Intermediate covariance matrices are then computed empirically by pushing forward a new set of $10{,}000$ test particles along the trained velocity field.}
    \label{fig3} 
\end{figure}
\Cref{fig3} shows that the moment relaxation achieves a uniformly smaller covariance error along the trajectory in this experiment. The resulting SDP is also solved within a few seconds, substantially faster than the particle back-propagation training procedure.

\subsection{Benamou--Brenier dynamics between more general distributions}\label{subsec:5.2}

We next consider the Benamou--Brenier dynamics from a Gaussian distribution to a one-dimensional lattice Ginzburg--Landau distribution:
\begin{align}
    \nu(y) \propto \exp\left[-\beta\left(\sum_{i=1}^{d+1}\frac{\lambda}{2}\left(\frac{y_i-y_{i-1}}{h}\right)^2+\frac{1}{4\lambda}(1-y_i^2)^2\right)\right],\quad y\in[-L,L]^d, \quad h = \frac{1}{d+1}
\end{align}
with boundary conditions $y_0 = y_{d+1}=0$. The first term is a nearest-neighbor interaction, while the second is a double-well potential. Thus, the target distribution has local spatial correlation.

We set $d=10$, $\beta=1/8$, $\lambda=0.03$, $T=5$, and $L=2.5$. We use an equispaced time grid on $[0,1]$ with $\Delta t=1/T$. We generate $N_{\text{emp}}=50{,}000$ samples from $\nu$ by tensor-train conditional sampling \cite{dolgov2020approximation, peng2025}. Let $\widehat{m}_\nu$ and $\widehat{\Sigma}_\nu$ be the empirical mean and covariance of the training samples from $\nu$. We choose the source distribution to be a Gaussian distribution
\[
    \mu = \N(\widehat{m}_\nu, 0.7 \,\widehat{\Sigma}_\nu).
\]
Thus, the source and target have comparable locations and covariance scales. We draw another $N_{\text{emp}}=50{,}000$ samples from $\mu$ to estimate the initial moments and solve the Benamou--Brenier path from $\mu$ to $\nu$ using the moment relaxation \eqref{problem:Mom2}. We use the single-coordinate clusters $\C = \set{\set{1}, \ldots, \set{d}}$ and a path graph $G_\mpath$ on $[d]$ as the cluster graph $\G$. To recover moments for a non-neighboring cluster pair, such as the pair involving $x_1$ and $x_5$, we additionally include the edge $\{1,5\}$ in $\E$. 

We use a monomial cluster basis. For each coordinate, we take $\phi_{i,j}(x_i)=x_i^j$. Unlike the Gaussian case, where the path is fully determined by the first and second moments, the Ginzburg--Landau target requires higher-order moments to represent its non-Gaussian structure. For a cluster $A = \set{i}$, the monomial cluster basis is
\begin{align*}
    \Phi_A^{\mathrm{mon}}(z_A) = \set{x_i^{\alpha_x}y_i^{\alpha_y}: \alpha_x + \alpha_y \le \deg_{\mathrm{mon}}}.
\end{align*}
In the experiments below, we use $\deg_{\mathrm{mon}} = 11$. 

Using the above basis and parameters, we solve \eqref{problem:Mom2}. The problem is a multi-block SDP, with one real symmetric PSD block for each coupling $\pi^s$. To exploit sparsity, we apply chordal decomposition directly to each time-block PSD constraint. The resulting SDPs are solved by MOSEK with a tolerance of $10^{-10}$. After solving the chordally decomposed SDP, the clique-level primal matrices are reassembled into the original moment matrices by averaging overlapping clique entries and applying PSD completion. These completed matrices are then used to extract moments of intermediate distributions. The resulting objective values and implementation statistics are summarized in \Cref{tab:gl-sdp-results}. The static monomial method is the static cluster moment relaxation for optimal transport from \cite{khoo2025} and is included as a reference baseline for the endpoint transport problem. The dynamic monomial row corresponds to the moment relaxation \eqref{problem:Mom2} applied to problem \eqref{problem:dot}.

\begin{table}[t]
    \centering
    \caption{SDP results for the Ginzburg--Landau experiment}
    \label{tab:gl-sdp-results}
    \scriptsize
    \setlength{\tabcolsep}{3pt}
    \resizebox{\textwidth}{!}{
    \begin{tabular}{|c|c|c|c|c|c|c|}
        \hline
        Method 
        & Basis 
        & PSD size 
        & Chordal blocks
        & Constraints
        & SDP value
        & Time(s)
        \\
        \hline
        static monomial
        & $\deg_{\rm mon}=11$
        & $771$
        & $11$ $(\max=188)$
        & $31370/34430$
        & $0.5428$
        & $99.55$
        \\
        \hline
        dynamic monomial
        & $\deg_{\rm mon}=11$
        & $771\times 5$
        & $55$ $(\max=188)$
        & $148835/166430$
        & $0.5331$
        & $539.09$
        \\
        \hline
    \end{tabular}}
    \vspace{0.5em}
    
    \begin{minipage}{0.96\textwidth}
    \footnotesize
    The static row reports the optimal transport relaxation from \cite{khoo2025} as a reference, while the dynamic row reports the moment relaxation applied to the sequential-coupling problem. The column \emph{Constraints} reports the number of affine constraints before and after chordal decomposition. The \emph{PSD size} is the original semidefinite block size before chordal decomposition. The \emph{SDP value} provides a lower bound for the squared Wasserstein distance.
    \end{minipage}
\end{table}

We reconstruct the intermediate low-dimensional marginals from the recovered moments by a maximum-entropy step. Let $M^s_{AB} (\rho^s)$ denote the recovered moments of the $AB$-marginal of the distribution $\rho^s$. These moments are extracted from the left- or right-layer moments of the coupling $\pi^s$. At the continuous level, the maximum-entropy reconstruction solves
\begin{align}
    \max_{\rho^s_{AB}} \quad & -\int  \log \frac{\dd \rho_{AB}^s }{\dd x_{AB}} \; \dd \rho^s_{AB} \tag{ME}\\
    \st & \rho^s_{AB}(R_x(\Phi_A \Phi_B^*)) = M^s_{AB} (\rho^s), \nonumber \tag{ME-a} \label{constraint-moment ME}\\
    & \rho_{AB}^s([-L,L]^{|AB|}) = 1.\tag{ME-b}
\end{align}
Here $\rho^s_{AB}$ is optimized over all nonnegative measures on $[-L,L]^{|AB|}$. In the computation, we solve a discretized version on a grid and relax the moment-matching constraint \eqref{constraint-moment ME} by
\begin{align*}
    \|\rho^s_{AB}(R_x(\Phi_A \Phi_B^*)) - M^s_{AB} (\rho^s)\|_{\infty}\le \varepsilon_{\mathrm{mom}}\max\{1,\|M^s_{AB} (\rho^s)\|_{\infty}\}.
\end{align*}
We use $\varepsilon_{\mathrm{mom}}=10^{-2}$ and $21$ grid points per coordinate. In \Cref{fig4}, the resulting $2$-dimensional marginals are visualized by kernel density estimation on $[-L,L]^2$ with $120$ grid points per coordinate. Here we do not use all moments available from the degree-$11$ SDP relaxation. Although the relaxation contains monomial moments up to order $22$, high-degree monomial moments are poorly scaled and can dominate the moment-matching constraints. Therefore we only use recovered monomial moments with total degree $|\alpha| \le 4$ in the maximum-entropy reconstruction.
\begin{figure}[H]  
    \centering  
    \captionsetup{labelformat=empty}
    \begin{subfigure}{\textwidth}
        \centering
        \includegraphics[width=1\textwidth]{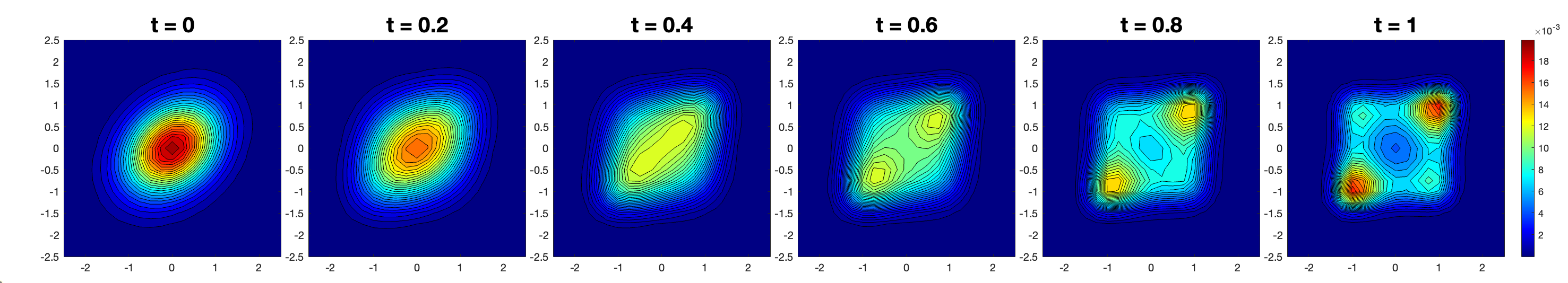} 
    \end{subfigure}
    \vspace{0.5em} 
    \begin{subfigure}{\textwidth}
        \centering
        \includegraphics[width=1\textwidth]{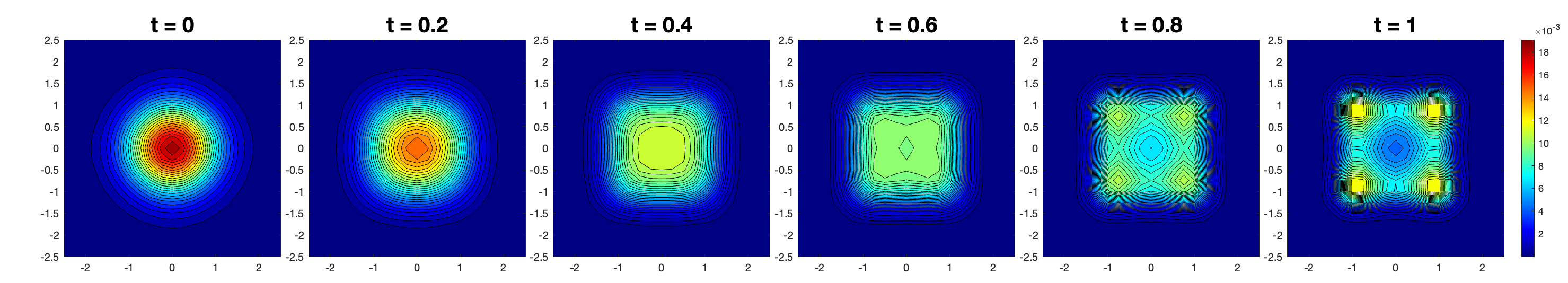} 
    \end{subfigure}
    \caption{Figure 4. Intermediate two-dimensional marginals for transport from a Gaussian law to a Ginzburg--Landau law in dimension $d=10$. Maximum-entropy reconstructions, based on moments obtained via moment relaxation \eqref{problem:Mom2}, show a gradual deformation toward the double-well target. The first row displays the $(1,2)$-marginal, capturing local interaction effects, while the second row presents the $(1,5)$-marginal, illustrating longer-range correlations.}
    \label{fig4}
\end{figure}

\Cref{fig4} shows that the maximum-entropy reconstructions from the recovered moments capture the gradual deformation from the Gaussian source to the Ginzburg--Landau target. The neighboring $(1,2)$-marginal captures local correlations induced by the interaction term, while the non-neighboring $(1,5)$-marginal illustrates how the relaxation can recover selected longer-range low-dimensional statistics when the corresponding cluster-pair edge is retained.

Finally, we use the dual slack matrices to compute the velocity field $\set{\widehat{v}(\plh, t_s)}_{s=0}^{T-1}$ as discussed in \Cref{subsec:3.3}. We then push forward test samples from $\mu$ by explicit Euler steps. On each interval $[t_s,t_{s+1}]$, we use five Euler substeps:
\begin{align*}
    X^{s,q+1}=X^{s,q}+\frac{\Delta t}{5}\widehat v(X^{s,q}, t_s),\qquad q=0,\ldots,4.
\end{align*}
The final value $X^{s,5}$ is used as the initial value for the next time interval. To stabilize the visualization, we only transport test samples satisfying $\|X\|_{\infty}\le 2.2$ and clip the transported samples to $[-L,L]^d$. 
\begin{figure}[H]  
    \centering  
    \captionsetup{labelformat=empty}
    \begin{subfigure}{\textwidth}
        \centering
        \includegraphics[width=0.9\textwidth]{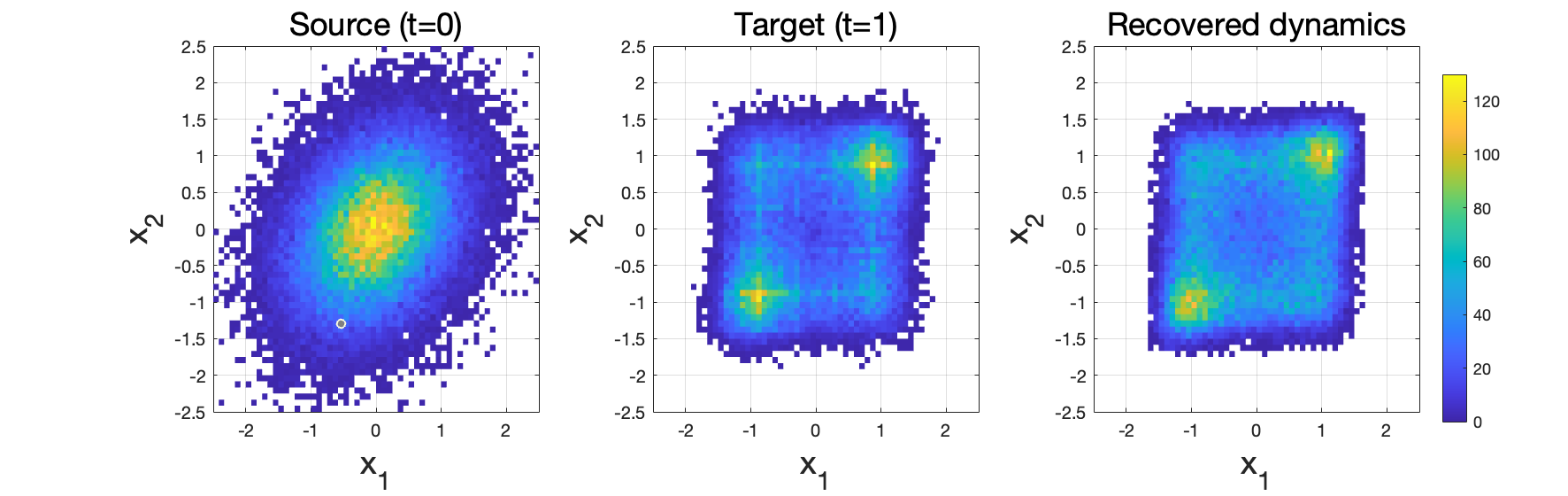} 
    \end{subfigure}
    \vspace{0.5em} 
    \begin{subfigure}{\textwidth}
        \centering
        \includegraphics[width=0.9\textwidth]{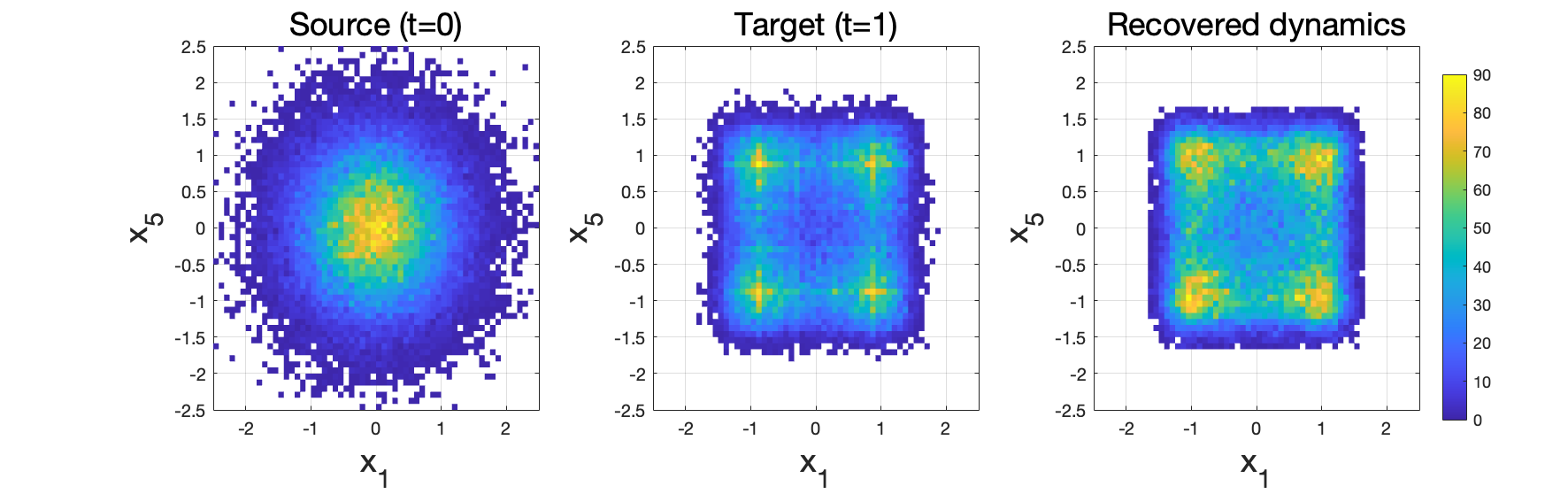} 
    \end{subfigure}
    \caption{Figure 5. Velocity-induced push-forward for transport from a Gaussian law to a Ginzburg--Landau law in dimension $d=10$. Panels compare the empirical source (left) and target (middle) distributions with the source pushed forward to $t=1$ (right) via the velocity field recovered from the computed dual solution of \eqref{problem:Mom2}. Rows $1$ and $2$ display the empirical $(1,2)$- and $(1,5)$-marginals, respectively.}
    \label{fig5}
\end{figure}
\noindent In \Cref{fig5}, we compare the empirical $(1,2)$- and $(1,5)$-marginals across three cases: the source samples, the target samples, and the push-forward samples generated by the recovered velocity fields at $t = 1$. This demonstrates that the computed dual solution of the moment relaxation provides an implementable approximate flow, not merely intermediate moment statistics.

\subsection{Single-spin update process between Ising models}\label{subsec:5.3}

We finally consider the constrained Markov process between Ising models introduced in \Cref{sec:Ising}. The Ising model is specified in \eqref{eq:Ising model}. We write $(\beta_\mu, h_\mu, J_\mu)$ and $(\beta_\nu, h_\nu, J_\nu)$ for the parameters of the source law $\mu$ and target law $\nu$, respectively. In the experiments below, the initial and terminal laws share the same inverse temperature $\beta_\mu = \beta_\nu = 0.4$, but have opposite uniform edge interactions $J_{\mu,ij} = 1$ and $J_{\nu,ij} = -1$ on the chosen Ising graph. Thus $\mu$ is ferromagnetic and favors aligned neighboring spins, whereas $\nu$ is antiferromagnetic and favors alternating neighboring spins. The external fields are set to zero: $h_\mu = h_\nu = 0$. We generate $N_{\text{emp}}=20{,}000$ samples $\set{X_r}_{r=1}^{N_{\text{emp}}} \sim \mu$ and $\set{Y_r}_{r=1}^{N_{\text{emp}}} \sim \nu$ using Glauber dynamics, and impose the initial and terminal constraints using the empirical distributions.

We consider two types of Ising graphs $\G_\ising$. The first is the one-dimensional path graph on  $[d]$, with edge set $\E_{\ising} = \set{\{i,i+1\}:i = 1, \ldots, d-1}$. For this $1$D Ising model, we choose $d=30$ and cluster size $n_c=2$. We decompose the coordinate set into contiguous clusters
\[
    A_k=\{(k-1)n_c+1,\ldots,\min(k\,n_c,d)\}, \quad k = 1, \ldots, \left\lceil{d/n_c}\right\rceil.
\]
The cluster graph is chosen to be the path graph on these contiguous clusters.

The second example is a $2$D Ising model on a $d_x\times d_y$ grid. In the experiment below, we take $d_x=d_y=4$, so the total number of spins is $d=16$. We partition the lattice into four $2 \times 2 $ clusters and choose the cluster graph to be a cycle, as shown in \Cref{fig6}.
\begin{figure}[H]
\centering
\begin{tikzpicture}[
    scale=0.75,
    every node/.style={font=\small},
    spin/.style={circle, draw, thick, minimum size=7mm, inner sep=0pt},
    clusterbox/.style={draw, dashed, rounded corners, thick, inner sep=6pt},
    cnode/.style={font=\small}
]

\foreach \c in {0,1,2,3}{
    \foreach \r in {0,1,2,3}{
        \pgfmathtruncatemacro{\n}{4*\c+\r+1}
        \node[spin] (v\n) at (1.7*\c,-1.7*\r) {\n};
    }
}
\foreach \r in {0,1,2,3}{
    \foreach \c in {0,1,2}{
        \pgfmathtruncatemacro{\a}{4*\c+\r+1}
        \pgfmathtruncatemacro{\b}{4*(\c+1)+\r+1}
        \draw[thick] (v\a) -- (v\b);
    }
}
\foreach \c in {0,1,2,3}{
    \foreach \r in {0,1,2}{
        \pgfmathtruncatemacro{\a}{4*\c+\r+1}
        \pgfmathtruncatemacro{\b}{4*\c+\r+2}
        \draw[thick] (v\a) -- (v\b);
    }
}
\node[clusterbox, fit=(v1)(v2)(v5)(v6), label=above:$A_1$] {};
\node[clusterbox, fit=(v3)(v4)(v7)(v8), label=below:$A_2$] {};
\node[clusterbox, fit=(v9)(v10)(v13)(v14), label=above:$A_3$] {};
\node[clusterbox, fit=(v11)(v12)(v15)(v16), label=below:$A_4$] {};
\node[cnode] at (2.55,-7.5) {(a) 2D Ising graph with cluster decomposition};
\begin{scope}[xshift=10.5cm, yshift=-1.2cm]
    \node[spin] (A1) at (0,0) {$A_1$};
    \node[spin] (A3) at (2.8,0) {$A_3$};
    \node[spin] (A2) at (0,-2.8) {$A_2$};
    \node[spin] (A4) at (2.8,-2.8) {$A_4$};
    \draw[thick] (A1) -- (A3);
    \draw[thick] (A1) -- (A2);
    \draw[thick] (A2) -- (A4);
    \draw[thick] (A3) -- (A4);
    \node[cnode] at (1.4,-4.0) {(b) Cluster graph};
\end{scope}

\end{tikzpicture}
\caption{Cluster decomposition and cluster graph for the $4\times 4$ $2$D Ising model.}
\label{fig6}
\end{figure}

For both the $1$D and $2$D models, the source and target Ising parameters are chosen as above. We take the number of time steps $T=5d$, corresponding to five complete sweeps over the spin system. The active spin at step $s$ is chosen cyclically as 
\[
    i_s = 1 + (s \bmod d), \quad s=0, \ldots, T-1. 
\]
We use the quadratic spin-flip cost $c^s(x,y) = \|x-y\|^2$, which equals $4$ if the active spin flips and $0$ otherwise. We solve the problem using the marginal relaxation \eqref{problem:Mar1}, which becomes a linear program. In the implementation, we use a reduced version adapted to the single-spin schedule. At time step $s$, coupling variables are introduced only for clusters $A$ with $i_s \in A$ and retained cluster-pairs $AB$ with $i_s \in AB$. Marginals of inactive blocks are propagated from the most recent time at which the corresponding block was updated. This reduced construction gives the same relevant local marginals for the single-spin schedule while substantially reducing the LP size. Solving the LP yields the recovered marginals, denoted by $\{\rho_A^s\}_{A\in\C}$ and $\{\rho_{AB}^s\}_{AB\in\E}$ for $s=0,\ldots,T$.

For the $1$D case, after solving the relaxation, we track the two-spin marginal on sites $\{1,2\}$ after each epoch. Here, a single epoch corresponds to a complete sweep of all spins. \Cref{fig7} shows the evolution of the $(1,2)$-marginal. The intermediate heatmaps display a gradual transfer of probability mass from aligned configurations to anti-aligned configurations.  
\begin{figure}[H] 
    \centering  
    \captionsetup{labelformat=empty}
    \includegraphics[width=1.0\textwidth]{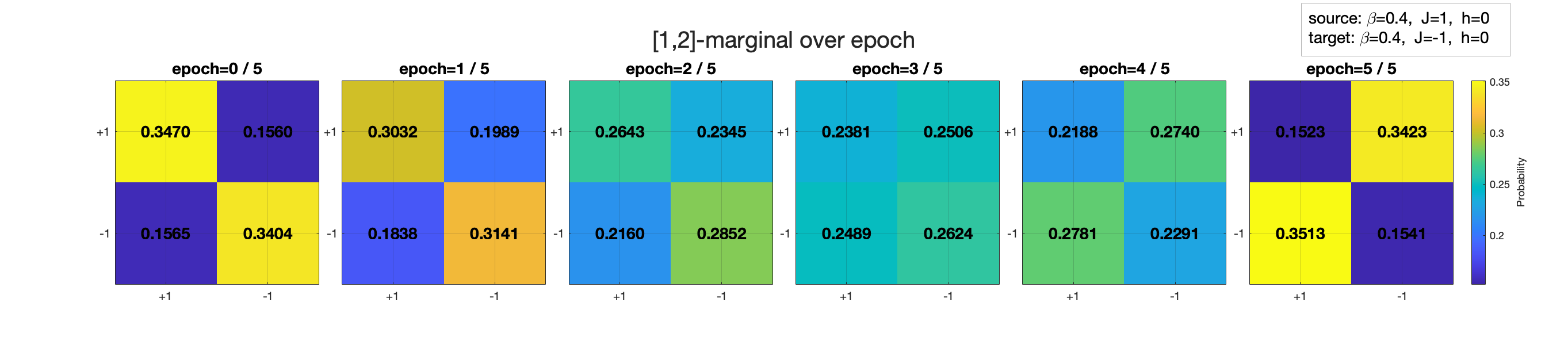} 
    \caption{Figure 7. Recovered marginals on cluster $A_1 = \{1,2\}$ for the 1D Ising experiment with $d=30$ spins. Across the five epochs, the recovered marginal transfers mass from aligned spin configurations, favored by the ferromagnetic source, to anti-aligned configurations, favored by the antiferromagnetic target.}
    \label{fig7} 
\end{figure}

After obtaining the local marginals from the relaxation, we fit Glauber dynamics using the linear regression form \eqref{eq:Glauber fitting linear reg}. For the $1$D experiment, we take the Glauber dependency graph to be the path graph on $[d]$. For an interior active spin $i=i_s$, the parameters $(h_i^s, J_{i,i-1}^s, J_{i,i+1}^s)$ are fitted from the recovered next-time local marginal on $B_i = \set{i-1,i,i+1}$ by weighted ridge regression. Since $B_i$ is contained in a retained cluster pair, its marginal can be obtained directly by projection from the relaxed solution. For boundary spins, the same procedure is used with $B_1 = \set{1,2}$ and $B_d = \set{d-1,d}$. The fitted parameters define a sequence of Glauber kernels $\{K_{\widehat{\theta}_s}^s\}_{s=0}^{T-1}$. Starting from the same source samples $\set{X^0_r}_{r=1}^{N_{\mathrm{emp}}}$, we generate particles by
\begin{align*}
    X^{s+1}_r\sim K_{\widehat{\theta}_s}^s(\cdot\mid X^s_r), \qquad s=0,\ldots,T-1.
\end{align*}
From the generated particles, we compute empirical local marginals and compare them with the marginals obtained from the relaxation.
\begin{figure}[H] 
    \centering  
    \captionsetup{labelformat=empty}
    \includegraphics[width=1.0\textwidth]{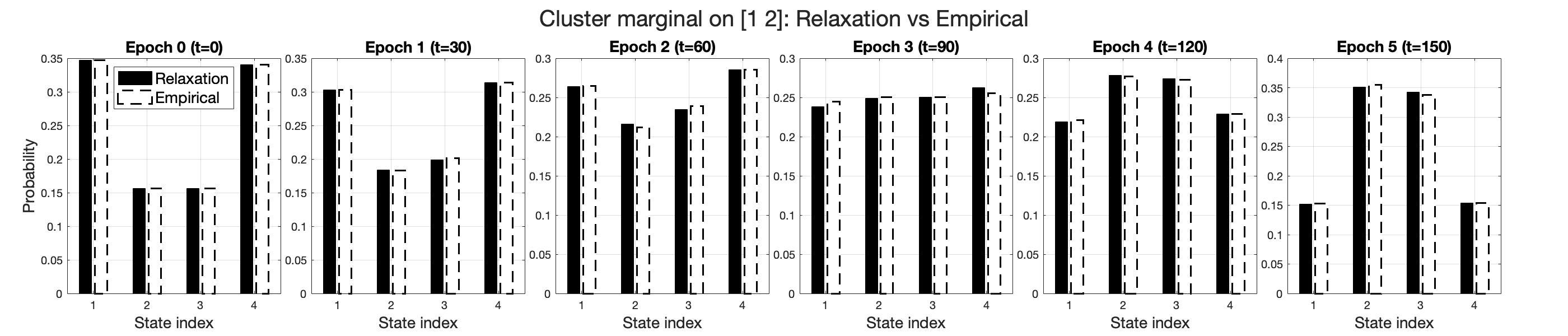} 
    \caption{Figure 8. Fitted Glauber dynamics reproduces the recovered two-spin marginals in the $1$D Ising experiment with $d=30$ spins. The solid black bars represent the marginals on $A_1 = \{1, 2\}$ recovered via the marginal relaxation \eqref{problem:Mar1}, while the open bars with dashed outline correspond to the empirical marginals generated by the fitted kernels. The state indices $1-4$ denote the spin configurations $(+1,+1)$, $(+1,-1)$, $(-1,+1)$, and $(-1,-1)$, respectively.}
    \label{fig8} 
\end{figure}
\noindent \Cref{fig8} compares the recovered marginals from the marginal relaxation and the empirical marginals of the fitted Glauber dynamics on the cluster $\set{1,2}$. The bars almost overlap throughout the process, showing that the fitted dynamics closely reproduces the recovered two-spin marginal on this block.

We also visualize a single particle trajectory generated by the fitted dynamics. \Cref{fig9} shows snapshots of one representative particle after each epoch. The trajectory changes by single-spin updates along the cyclic schedule, illustrating one realization of the fitted Markov process whose empirical marginals match the recovered marginals.
\begin{figure}[H] 
    \centering  
    \captionsetup{labelformat=empty}
    \includegraphics[width=1.0\textwidth]{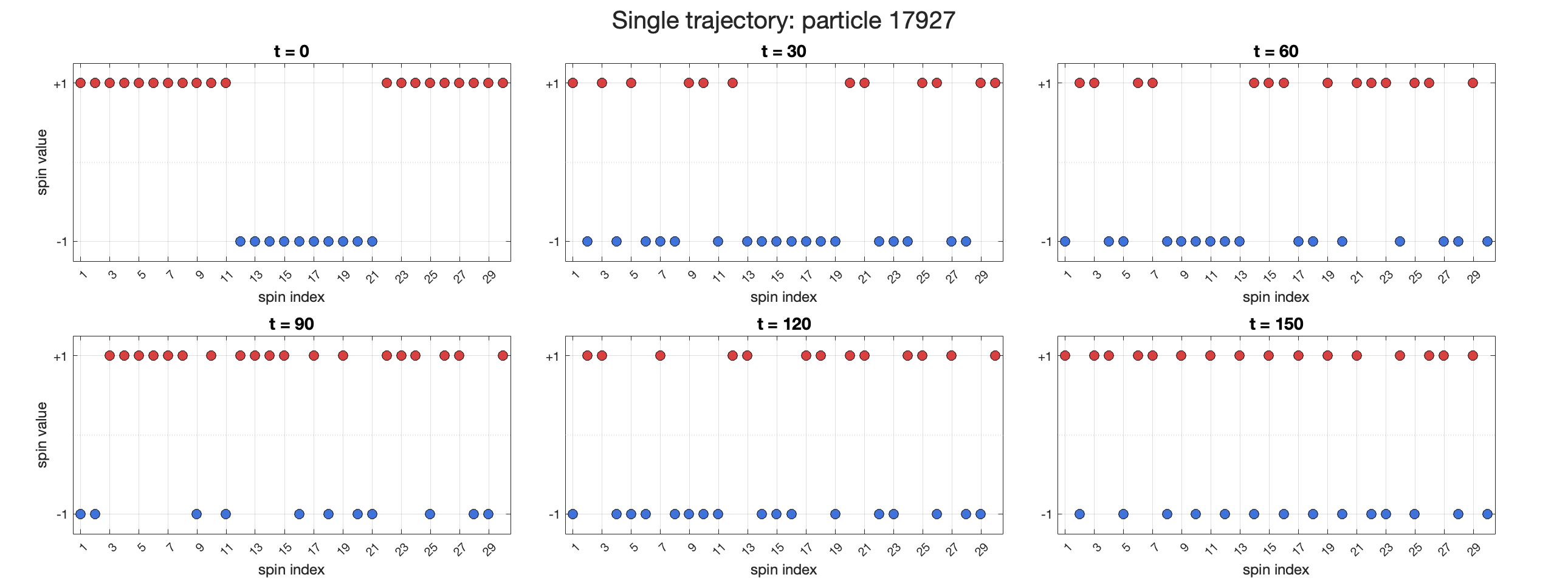} 
    \caption{Figure 9. Representative trajectory generated by the fitted Glauber dynamics in the 1D Ising experiment. The panels illustrate the time evolution of a single sample path, transitioning from a ferromagnetic configuration at $t=0$ to an antiferromagnetic configuration at $t=150$.}
    \label{fig9} 
\end{figure}

For the $2$D case, we use the same initial and terminal parameters and the $2 \times 2$ cluster structure described above. The marginal relaxation is solved using the same single-spin schedule. In the fitting step, we use a dense local graph for Glauber dynamics induced by the retained cluster-pair supports: if $AB \in \E$ is an edge in the cluster graph, then all spin pairs inside $AB$ are included in $\G_\glb$. The local marginal required for fitting the update of spin $i_s$ is not always directly available from a single retained cluster or cluster-pair marginal, so we approximate it by the Bethe-type reconstruction. \Cref{fig10} compares the recovered marginals from the marginal relaxation and the empirical marginals generated by the fitted Glauber dynamics on the cluster $A_1 = \set{1,2,5,6}$. The fitted process reproduces the recovered local marginal on $A_1$ well across the displayed epochs.
\begin{figure}[htbp] 
    \centering  
    \captionsetup{labelformat=empty}
    \includegraphics[width=1.0\textwidth]{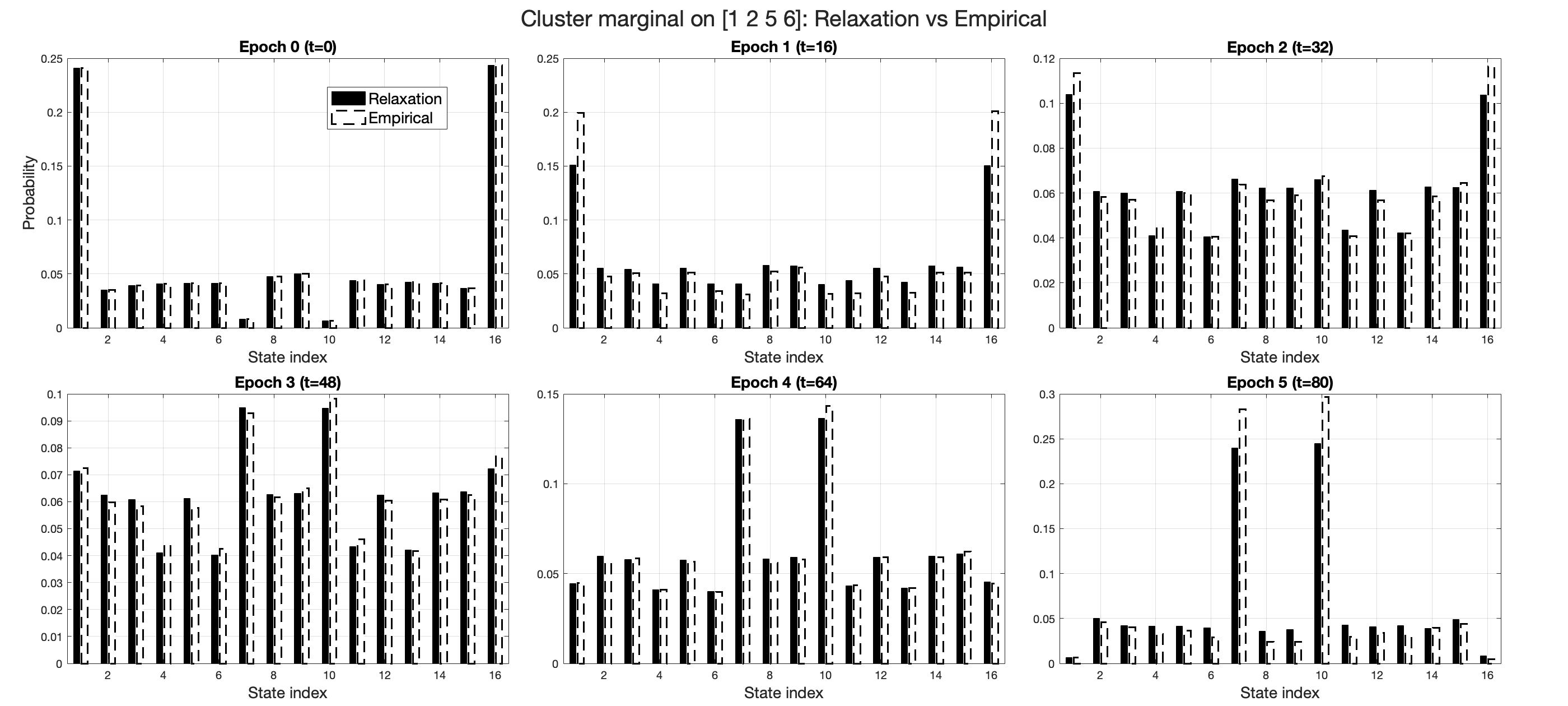} 
    \caption{Figure 10. Fitted Glauber dynamics reproduces the recovered four-spin marginals in the $2$D Ising experiment with $d=16$ spins. The solid black bars represent the marginals on $A_1 = \{1, 2,5,6\}$ recovered via the marginal relaxation \eqref{problem:Mar1}, while the open bars with dashed outline correspond to the empirical marginals generated by the fitted kernels. The $16$ states on the x-axis correspond to all possible configurations of $(X_1, X_2, X_5, X_6)$, sorted in standard binary order (mapping $+1 \to 0$, $-1 \to 1$) where $X_6$ alternates most rapidly.}
    \label{fig10} 
\end{figure}

\section{Conclusion and discussion}\label{sec:conclusion}

In this work, we extended the convex-relaxation framework developed for optimal transport in \cite{khoo2025} to the optimization of Markov processes via a time-discrete sequential-coupling formulation. The proposed relaxations avoid explicit representation of the full joint law while retaining local statistics of the intermediate distributions, thereby mitigating the curse of dimensionality. In the unconstrained kinetic-cost setting, the framework recovers the Benamou–Brenier geodesic on the prescribed time grid, and the dual variables provide a practical way to reconstruct approximate velocity fields. Beyond the Benamou–Brenier setting, we showed how the recovered local statistics can be converted into implementable dynamics by fitting a parametric transition family, as illustrated by Glauber dynamics for finite spin systems.

Several directions remain open. It would be important to characterize conditions under which the locality of the initial and terminal distributions is preserved along the interpolating process. Such a result would directly justify sparse marginal and cluster-moment relaxations at intermediate times. Another direction is to extend the framework from Markov processes with prescribed transition structures to broader classes of controlled dynamics. Rotation-constrained transport is one representative example: instead of moving mass through an unconstrained velocity field, one seeks to steer a distribution through structured transformations. Incorporating such control variables into the relaxation would broaden the scope of the convex framework for process optimization.

\newpage
\printbibliography

\end{document}